\newtheorem{theorem}{Theorem}[section]
\theoremstyle{definition}
\newtheorem{remark}{Remark}
\newcommand{\eps}{\varepsilon}
\renewcommand{\phi}{\varphi}
\newcommand{\ds}{\, ds}
\newcommand{\du}{\, du}
\newcommand{\dint}{\displaystyle \int}
\newcommand{\R}{\mathbb{R}}
\DeclareMathOperator{\e}{e}
\DeclareMathOperator{\inte}{int}
\newcommand{\matj}{\mathcal{J}}
\newcommand{\nn}{\nonumber }
\newcommand{\parc}[1]{\ensuremath{\left(#1\right)}}
\title[Optimal control of non-autonomous SEIRS models]{Optimal control
of non-autonomous SEIRS models with vaccination and treatment}
\author[J. P. Mateus, P. Rebelo, S. Rosa, C. M. Silva and D. F. M. Torres]{}
\subjclass{Primary: 34H05, 92D30; Secondary: 37B55, 49M05.}
\keywords{Epidemic model, non-autonomous control system,
vaccination and treatment control, optimal control, numerical simulations.}
\email{jmateus@ipg.pt}
\email{rebelo@ubi.pt}
\email{rosa@ubi.pt}
\email{csilva@ubi.pt}
\email{delfim@ua.pt}
\thanks{Mateus was partially supported by FCT through
CMA-UBI (project UID/MAT/00212/2013), Rebelo by FCT
through CMA-UBI (project UID/MAT/00212/2013),
Rosa by FCT through IT (project UID/EEA/50008/2013),
Silva by FCT through CMA-UBI (project UID/MAT/00212/2013),
and Torres by FCT through CIDMA
(project UID/MAT/04106/2013) and TOCCATA (project
PTDC/EEI-AUT/2933/2014 funded by FEDER and COMPETE 2020).}
\thanks{$^*$ Corresponding author: delfim@ua.pt}
\begin{document}

\maketitle


\centerline{\scshape Joaquim P. Mateus}
\medskip
{\footnotesize
\centerline{Research Unit for Inland Development (UDI)}
\centerline{Polytechnic Institute of Guarda, 6300-559 Guarda, Portugal}
\medskip
\centerline{Centro de Matem\'atica e Aplica\c{c}\~oes da Universidade da Beira Interior (CMA-UBI)}
\centerline{Departamento de Matem\'atica}
 \centerline{Universidade da Beira Interior, 6201-001 Covilh\~a, Portugal}}

\medskip

\centerline{\scshape Paulo Rebelo}
\medskip
{\footnotesize
\centerline{Centro de Matem\'atica e Aplica\c{c}\~oes da Universidade da Beira Interior (CMA-UBI)}
\centerline{Departamento de Matem\'atica}
\centerline{ Universidade da Beira Interior, 6201-001 Covilh\~a, Portugal}
}

\medskip

\centerline{\scshape Silv\'erio Rosa}
\medskip
{\footnotesize
\centerline{Departamento de Matem\'atica and Instituto de Telecomunica\c{c}\~oes (IT)}
\centerline{Universidade da Beira Interior, 6201-001 Covilh\~a, Portugal}
}

\medskip

\centerline{\scshape C\'esar M. Silva}
\medskip
{\footnotesize
\centerline{Centro de Matem\'atica e Aplica\c{c}\~oes da Universidade da Beira Interior (CMA-UBI)}
\centerline{Departamento de Matem\'atica}
\centerline{ Universidade da Beira Interior, 6201-001 Covilh\~a, Portugal}
}

\medskip

\centerline{\scshape Delfim F. M. Torres$^*$}
\medskip
{\footnotesize
\centerline{Center for Research and Development in Mathematics and Applications (CIDMA)}
\centerline{Department of Mathematics, University of Aveiro, 3810-193 Aveiro, Portugal}
}


\begin{abstract}
We study an optimal control problem for a non-autonomous SEIRS model
with incidence given by a general function of the infective,
the susceptible and the total population, and with vaccination
and treatment as control variables. We prove existence and uniqueness
results for our problem and, for the case of mass-action incidence,
we present some simulation results designed to compare an autonomous
and corresponding periodic model, as well as the controlled
versus uncontrolled models.
\end{abstract}


\section{Introduction}

Frequently, decision makers must balance the effort put in treatment and vaccination
in order to give the best response to outbreaks of infectious diseases. Optimal control
is an important mathematical tool that can be used to find the best strategies
\cite{MyID:356,MyID:340,MyID:353}. Real-world problems, under recent
investigation with optimal control techniques, include Ebola
\cite{MyID:364} and Zika \cite{Ding:Tao:Zhu}.

In~\cite{Gaff-Schaefer-MBE-2009}, Gaff and Schaefer studied three distinct autonomous
epidemic models, having vaccination and treatment as control variables. They established
existence and, in some small interval, uniqueness of solutions
for the optimal control problems. In that paper, one of the main questions under study
is to know if the underlying epidemic structure has a significant impact on the obtained
optimal control strategy. One of the models discussed in~\cite{Gaff-Schaefer-MBE-2009}
was the SEIR, which is one of the most studied models in epidemiology. In the family
of SEIRS models, it is assumed that the population is divided in four compartments:
additionally to the infected class $I$, the susceptible class $S$ and the recovered class $R$,
present in SIR models, an exposed class $E$ is also considered, in order to divide
the infected population in the group of individuals that are infected and can infect others
(the infective class) and the individuals that are infected but are not yet able
to infect other individuals (the exposed or latent class). Such division of the population
is particularity suitable to include several infectious diseases, e.g. measles and,
assuming vertical transmission, rubella~\cite{Li-Smith-Wang-SJAM-2001}.
When there is no recovery, the model can be also used to describe diseases such as
Chagas' disease~\cite{Driessche-ME-2008}. It is also appropriate to model
hepatitis B and AIDS~\cite{Li-Smith-Wang-SJAM-2001}, and Ebola \cite{MyID:340}.
Although influenza can be modeled by a SEIRS model~\cite{Cori-Valleron-E-2012},
due to the short latency period, it is sometimes better to use the simpler
SIRS formulation~\cite{Edlund-E-2011}.

In~\cite{Gaff-Schaefer-MBE-2009} the parameters of the considered models
are assumed to be independent of time. This is not very realistic
in many situations, in particular due to periodic seasonal fluctuations.
A classical example of seasonal patterns of incidence, exhibited by some infectious diseases,
is given by data on weekly measles notification in England and Wales during the period
1948--1968~\cite{Anderson-May-OUP-1991}. Other examples occur in several childhood diseases
such as mumps, chicken-pox, rubella and pertussis~\cite{Martcheva-JBD-2009}. It is also worth
to mention that some environmental and demographic effects can be non-periodic. For example,
for some diseases like cholera and yellow fever, it is known that the size of the latency
period may decrease with global warming~\cite{Shope-EHP-1991}. In this work, we consider
a controlled SEIRS model with vaccination and treatment as control variables but,
unlike \cite{Gaff-Schaefer-MBE-2009}, we let the parameters of our model to be time dependent.
One of our main objectives is to discuss the effect of seasonal behavior on the optimal strategy.

Another important aspect of our work is that we consider a model with a general incidence
function given by some function $\phi$ of the susceptible, the infective and the total population.
This allows us to prove simultaneously results on existence and uniqueness of optimal solution
for models with several different incidence functions that have already been considered
in the context of SEIR/SEIRS models. In particular, our setting includes not only
Michaelis--Menten incidence functions, considered for instance in
\cite{Bai-Zhou-NARWA-2012, Gao-Chen-Teng-NARWA-2008,Kuniya-Nakata-AMC-2012,%
Nakata-Kuniya-JMAA-2010,Wang-Derrick-BIMAS-1978,Zhang-Teng-BMB-2007},
but also incidence functions that are not bilinear, which are appropriate to include
saturation effects as well as other non-linear
behaviors~\cite{Liu-Hethcote-Levin-JMB-1987,Zhou-Xiao-Li-CSF-2007}.
Additionally, and in contrast with~\cite{Gaff-Schaefer-MBE-2009},
we assume that immunity can be partial and thus a fraction of the
recovered individuals return to the susceptible class.

The paper is organized in the following way.
In Section~\ref{sec:model}, we describe our non-autonomous SEIRS model,
including treatment and vaccination as control variables,
and we formulate the optimal control problem under investigation.
Then, in Section~\ref{section:Existence-control}, we discuss the question
of existence of optimal solutions. The optimal controls are characterized
in Section~\ref{section:characterization-control} with
the help of Pontryagin's minimum principle and,
in Section~\ref{section:Uniqueness-control},
we present a result concerning uniqueness of the optimal control.
We end with Section~\ref{section:Simulation-control} of numerical simulations.

\vspace{-10pt}
\section{The non-autonomous SEIRS model and the optimal control problem}
\label{sec:model}

In practice, evolution on the number of susceptible, exposed,
infective and recovered, depends on some factors that can be controlled.
Two of the main factors are: treatment of infective
and vaccination of susceptible. For this reason,
we consider a non-autonomous SEIRS model including treatment,
denoted by $\mathbbm{T}$, and vaccination, denoted by $\mathbbm{V}$,
as control variables. Namely, we consider the optimal control problem
\begin{equation}
\label{seir003}
\tag{P}
\begin{gathered}
\matj\parc{I,\mathbbm{T},\mathbbm{V}} =\displaystyle \int_0^{t_f}
\left(\kappa_1 I(t)+\kappa_2 \mathbbm{T}^2(t)+\kappa_3\mathbbm{V}^2(t)\right) dt
\longrightarrow \min,\\
\begin{cases}
S'(t) = \Lambda\parc{t} -\beta\parc{t}\varphi(S(t),N(t),I(t))
-\mu\parc{t} S(t)+\eta\parc{t} R(t) - \mathbbm{V}(t)S(t),\\
E'(t) =  \beta\parc{t} \varphi(S(t),N(t),I(t)) -\parc{\mu \parc{t} + \eps \parc{t}} E(t),\\
I'(t) =  \eps\parc{t}E(t) -\parc{\mu\parc{t}+\gamma \parc{t}} I(t) -\mathbbm{T}(t)I(t),\\
R'(t) = \gamma \parc{t} I(t) -(\mu\parc{t}+\eta\parc{t}) R(t)
+ \mathbbm{T}(t)I(t) + \mathbbm{V}(t)S(t),\\
\end{cases}\\
(S(0),E(0),I(0),R(0))=(S_0,E_0,I_0,R_0),
\end{gathered}
\end{equation}
where $\kappa_1,\kappa_2,\kappa_3$
and $S_0,E_0,I_0,R_0$ are non-negative,
the state variables are absolutely continuous functions,
i.e., $(S(\cdot),E(\cdot),I(\cdot),R(\cdot))
\in AC\left([0,t_f];\R^4\right)$,
and the controls are Lebesgue integrable, i.e.,
$(\mathbbm{T}(\cdot),\mathbbm{V}(\cdot))
\in L^1\left([0,t_f]; [0,\tau_{\max}]\times[0,\nu_{\max}]\right)$.
Moreover, the following conditions hold:
\begin{enumerate}[$C$1)]
\item \label{cond-C1}
$\Lambda, \beta, \mu, \eta, \eps, \gamma \in C^1([0,t_f];\R)$ are $\omega$-periodic;

\item \label{cond-C2} $\varphi \in C^2(\R^3;\R)$;

\item \label{cond-C3} $\varphi(0,y,x)=\varphi(x,y,0)=0$
for all $x \in \R_0^+$ and $y \in \R^+$;

\item \label{cond-C4} function $f:\mathcal D \to \R$
defined in $\mathcal D=\{(x,y,z) \in [0,t_f]^3: x+z \le y\}$
by $f(x,y,z)=\varphi(x,y,z)/z$, if $z>0$, and
$f(x,y,0)=\lim_{z \to 0} \varphi(x,y,z)/z$,
is continuous and bounded.
\end{enumerate}

The total population $N(t)=S(t)+E(t)+I(t)+R(t)$,
$t \in [0, t_f]$, is not constant (see Remark~\ref{remark1}).
It can be also seen, from the equations that define our control
system, that we assume treatment to be applied to infective individuals only,
moving a fraction of them from the infected to the recovered compartment,
and that vaccination is applied to susceptible individuals only,
also moving a fraction of them to the recovered class.
Moreover, note that in our problem \eqref{seir003},
besides general non-autonomous parameters, we consider
a general incidence function. As examples of such incidence functions,
we mention the ones obtained by $\varphi(S,N,I)= SI/(1+\alpha I)$,
considered for instance  in~\cite{Safi-Garba-CMMM-2012,Zhang-Teng-CSF-2009},
$\varphi(S,N,I)= I^pS^q$, considered in
\cite{Hethcote-Lewis-Driessche-JMB-1989,Korobeinikov-Maini-MBE-2004,Liu-Hethcote-Levin-JMB-1987},
and $\varphi(S,N,I)=S I^p/(1+\alpha I^q)$, considered
in~\cite{Driessche-Hethcote-JMB-1991,Ruan-Wang-JDE-2003}.
Naturally, one needs to specify the incidence function,
the other parameters and functions of the model, in order to
carry out simulations and investigate particular situations.
This is done in Section~\ref{section:Simulation-control},
where we compare an autonomous problem with a corresponding periodic model,
as well as uncontrolled and corresponding controlled models.
Before that, we prove results on existence of optimal solution
(Theorem~\ref{mr:thm:exist}), necessary optimality conditions
(Theorem~\ref{thm:Pontryagin-SEIR-Mayer}), and uniqueness of solution
(Theorem~\ref{thm:uniq}) for \eqref{seir003}.


\section{Existence of optimal solutions}
\label{section:Existence-control}

Problem \eqref{seir003} is an optimal control problem in Lagrange form:
\begin{equation}
\label{general-problem}
\begin{gathered}
J(x,u)=\displaystyle \int_{t_0}^{t_1} \mathcal{L}(t,x(t),u(t)) \ dt
\longrightarrow \min,\\
\begin{cases}
x'(t)=f\left(t,x(t),u(t)\right), \quad \text{a.e.} \ t \in [t_0,t_1],\\
x(t_0)=x_0,
\end{cases}\\
x(\cdot) \in AC\left([t_0,t_1];\R^n\right),
\quad u(\cdot) \in L^1([t_0,t_1];U\subset \R^m).
\end{gathered}
\end{equation}
In the above context, we say that a pair $(x,u)
\in AC\left([t_0,t_1];\R^n\right) \times L^1([t_0,t_1];U)$
is feasible if it satisfies the Cauchy problem in~\eqref{general-problem}.
We denote the set of all feasible pairs by $\mathcal F$.
Next, we recall a theorem about existence of solution for optimal control problems
\eqref{general-problem}, contained in Theorem III.4.1 and Corollary III.4.1 
in~\cite{Fleming-Rishel-Springer-Verlag-1975}.

\begin{theorem}[See \cite{Fleming-Rishel-Springer-Verlag-1975}]
\label{teo:existence-Fleming-Raymond-1974}
For problem~\eqref{general-problem}, suppose that $f$ and $\mathcal{L}$
are continuous and there exist positive constants $C_1$ and $C_2$ such that,
for $t \in \R$, $x,x_1,x_2 \in \R^n$ and $u \in \R^m$, we have
\begin{enumerate}[a)]
\item \label{teo:existence-Fl-Ray-1974-a}
$\|f(t,x,u)\| \le C_1(1+\|x\|+\|u\|)$;

\item \label{teo:existence-Fl-Ray-1974-b}
$\|f(t,x_1,u)-f(t,x_2,u)\| \le C_2\|x_1-x_2\|(1+\|u\|)$;

\item \label{teo:existence-Fl-Ray-1974-c}
$\mathcal F$ is non-empty;

\item \label{teo:existence-Fl-Ray-1974-d}
$U$ is closed;

\item \label{teo:existence-Fl-Ray-1974-e}
there is a compact set $S$ such that $x(t_1) \in S$
for any state variable $x$;

\item \label{teo:existence-Fl-Ray-1974-f}
$U$ is convex, $f(t,x,u)=\alpha(t,x)+\beta(t,x)u$,
and $\mathcal{L}(t,x,\cdot)$ is convex on $U$;

\item \label{teo:existence-Fl-Ray-1974-g}
$\mathcal{L}(t,x,u)\ge c_1|u|^\beta-c_2$, for some $c_1>0$ and $\beta >1$.
\end{enumerate}
Then, there exist $(x^*,u^*)$ minimizing $J$ on $\mathcal F$.
\end{theorem}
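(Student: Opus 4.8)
The plan is to follow the direct method of the calculus of variations (Tonelli's method), adapted to the control setting. Set $m:=\inf_{(x,u)\in\mathcal F}J(x,u)$. By hypothesis~\ref{teo:existence-Fl-Ray-1974-c}, $\mathcal F\neq\emptyset$, so $m<+\infty$; and by the coercivity bound~\ref{teo:existence-Fl-Ray-1974-g} one has $\mathcal{L}\ge -c_2$, whence $J\ge -c_2(t_1-t_0)>-\infty$, so $m$ is finite. I would then choose a minimizing sequence $(x_k,u_k)\in\mathcal F$ with $J(x_k,u_k)\to m$, and the goal is to extract a limit pair that is feasible and attains $m$.

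First I would extract compactness from the controls. Since $J(x_k,u_k)$ is bounded and, by~\ref{teo:existence-Fl-Ray-1974-g}, $J(x,u)\ge c_1\int_{t_0}^{t_1}|u(t)|^\beta\,dt-c_2(t_1-t_0)$, the sequence $\{u_k\}$ is bounded in $L^\beta$. As $\beta>1$, $L^\beta$ is reflexive, so after passing to a subsequence $u_k\rightharpoonup u^*$ weakly in $L^\beta$; because $U$ is closed and convex (hypotheses~\ref{teo:existence-Fl-Ray-1974-d} and~\ref{teo:existence-Fl-Ray-1974-f}) and each $u_k(t)\in U$, Mazur's lemma forces $u^*(t)\in U$ a.e. Next I would bound the states: from the growth condition~\ref{teo:existence-Fl-Ray-1974-a}, $\|x_k'(t)\|\le C_1(1+\|x_k(t)\|+\|u_k(t)\|)$, and Gronwall's inequality with the fixed initial datum $x_0$ yields a uniform bound $\|x_k\|_\infty\le M$, consistent with the endpoint compactness of~\ref{teo:existence-Fl-Ray-1974-e}. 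The $L^\beta$-boundedness with $\beta>1$ makes $\{u_k\}$ uniformly integrable, hence $\{x_k'\}$ is uniformly integrable, so $\{x_k\}$ is equi-absolutely continuous; Arzel\`a--Ascoli then gives a further subsequence with $x_k\to x^*$ uniformly, $x^*\in AC$.

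The decisive step is to pass to the limit in the dynamics, and here the affine structure in~\ref{teo:existence-Fl-Ray-1974-f}, $f(t,x,u)=\alpha(t,x)+\beta(t,x)u$, is essential. Since $\alpha,\beta$ are continuous and $x_k\to x^*$ uniformly with all iterates in the compact ball of radius $M$, one has $\alpha(\cdot,x_k)\to\alpha(\cdot,x^*)$ and $\beta(\cdot,x_k)\to\beta(\cdot,x^*)$ uniformly; combining the uniform convergence of the coefficient with the weak convergence $u_k\rightharpoonup u^*$ gives $\beta(\cdot,x_k)u_k\rightharpoonup\beta(\cdot,x^*)u^*$ weakly. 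Consequently $x_k'\rightharpoonup\alpha(\cdot,x^*)+\beta(\cdot,x^*)u^*=f(\cdot,x^*,u^*)$, and passing to the limit in $x_k(t)=x_0+\int_{t_0}^t x_k'(s)\,ds$ shows $x^{*\prime}=f(t,x^*,u^*)$ a.e.\ with $x^*(t_0)=x_0$; the Lipschitz hypothesis~\ref{teo:existence-Fl-Ray-1974-b} guarantees this Cauchy problem is well posed, so $(x^*,u^*)\in\mathcal F$.

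It remains to prove optimality, and the main obstacle is the weak lower semicontinuity of the Lagrange cost, since weak convergence of the controls does not by itself pass to the limit inside a nonlinear integrand. Here I would invoke the classical Tonelli--Ioffe lower semicontinuity theorem for integral functionals: because $\mathcal{L}$ is continuous, $\mathcal{L}(t,x,\cdot)$ is convex on $U$ (hypothesis~\ref{teo:existence-Fl-Ray-1974-f}) and bounded below (hypothesis~\ref{teo:existence-Fl-Ray-1974-g}), and since $x_k\to x^*$ uniformly while $u_k\rightharpoonup u^*$ weakly, one obtains $\liminf_k\int_{t_0}^{t_1}\mathcal{L}(t,x_k,u_k)\,dt\ge\int_{t_0}^{t_1}\mathcal{L}(t,x^*,u^*)\,dt$, i.e.\ $J(x^*,u^*)\le\liminf_k J(x_k,u_k)=m$. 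As $(x^*,u^*)\in\mathcal F$ gives the reverse inequality $J(x^*,u^*)\ge m$, we conclude $J(x^*,u^*)=m$, so $(x^*,u^*)$ minimizes $J$ on $\mathcal F$. Note that the convexity requirement in~\ref{teo:existence-Fl-Ray-1974-f} cannot be dropped: without it the functional is only lower semicontinuous after convexification of the integrand, and minimizers may fail to exist.
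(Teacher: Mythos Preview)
The paper does not supply its own proof of this statement: Theorem~\ref{teo:existence-Fleming-Raymond-1974} is quoted from Fleming and Rishel~\cite{Fleming-Rishel-Springer-Verlag-1975} (Theorem~III.4.1 and Corollary~III.4.1) and is used as a black box to derive Theorem~\ref{mr:thm:exist}. So there is nothing in the paper to compare your argument against.

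That said, your sketch is correct and is essentially the classical direct-method proof one finds in Fleming--Rishel: finiteness of the infimum from~\ref{teo:existence-Fl-Ray-1974-c} and~\ref{teo:existence-Fl-Ray-1974-g}; $L^\beta$-boundedness and weak compactness of a minimizing sequence of controls from the coercivity~\ref{teo:existence-Fl-Ray-1974-g}; uniform boundedness and equi-absolute continuity of the states from~\ref{teo:existence-Fl-Ray-1974-a} via Gronwall and de~la~Vall\'ee--Poussin; passage to the limit in the dynamics exploiting the affine-in-$u$ structure of~\ref{teo:existence-Fl-Ray-1974-f}; and weak lower semicontinuity of the cost from convexity of $\mathcal L(t,x,\cdot)$. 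The use of Mazur's lemma together with closedness and convexity of $U$ to retain $u^*(t)\in U$ a.e.\ is the standard device. One small point worth making explicit is that the weak convergence $\beta(\cdot,x_k)u_k\rightharpoonup\beta(\cdot,x^*)u^*$ is in $L^1$ (pair with $L^\infty$ test functions, use strong convergence of $\beta(\cdot,x_k)$ in $L^\infty$ against weak convergence of $u_k$ in $L^\beta$), which is exactly what is needed to pass to the limit in the integrated state equation.
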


Before applying Theorem~\ref{teo:existence-Fleming-Raymond-1974}
to obtain an existence result to our problem \eqref{seir003},
we make a useful remark.

\begin{remark}
\label{remark1}
Adding the equations in~\eqref{seir003}, we obtain that
$N'(t)=\Lambda(t)-\mu(t)N(t)$, which describes the behavior
of the total population $N(t)$. Since $N_0=N(t_0)=S_0+E_0+I_0+R_0$, we have
\begin{equation*}
N(t)=N_0\e^{\int_0^{t_f}\mu(s)\ds}
+\dint_0^{t_f} \Lambda(u)\e^{\int_u^{t_f}\mu(s)\ds} \du \le K,
\end{equation*}
where $K= N_0 +\sup_{t \in [0,t_f]} \Lambda(t) / \inf_{t \in [0,t_f]} \mu(t)$.
Thus,
\begin{equation}
\label{eq:bbbb1}
S(t),E(t),I(t),R(t) \le K.
\end{equation}
Consider the problem obtained by replacing function $\phi$ in~\eqref{seir003}
by some bounded and twice continuously differentiable function $\psi$ such that
$\psi(S,N,I)=\phi(S,N,I)$ for $(S,N,I) \in [0,K]^3$ (and maintaining the initial condition,
the cost functional and the set of admissible controls). By~\eqref{eq:bbbb1},
this new problem has exactly the same solutions as problem~\eqref{seir003}.
\end{remark}

\begin{theorem}[Existence of solutions for the optimal control problem \eqref{seir003}]
\label{mr:thm:exist}
There exists an optimal control pair $\left(\mathbbm{T}^*, \mathbbm{V}^*\right)$
and a corresponding quadruple $(S^*,E^*,I^*,\break R^*)$ of the initial
value problem in \eqref{seir003} that minimizes the cost functional
$\matj$ in~\eqref{seir003} over
$L^1\left([0,t_f]; [0,\tau_{\max}] \times [0,\nu_{\max}]\right)$.
\end{theorem}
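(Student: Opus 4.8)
The plan is to cast \eqref{seir003} in the abstract form \eqref{general-problem} and to invoke Theorem~\ref{teo:existence-Fleming-Raymond-1974}, after first reducing to a problem with globally bounded, globally Lipschitz data; the verification of hypotheses a)--g) is then routine, and the only point requiring real care is the legitimacy of that reduction.

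First I would establish the a priori bounds that make the reduction sound. A standard invariance (quasi-positivity) argument shows that every feasible quadruple satisfies $S(t),E(t),I(t),R(t)\ge 0$ on $[0,t_f]$: whenever a coordinate vanishes while the others are non-negative, its derivative is non-negative, since $S'=\Lambda+\eta R\ge 0$ when $S=0$, $E'=\beta\varphi(S,N,I)\ge 0$ when $E=0$ (using that $\varphi\ge 0$ on $\mathcal D$, as holds for all incidence functions considered), $I'=\varepsilon E\ge 0$ when $I=0$, and $R'=\gamma I+\mathbbm{T}I+\mathbbm{V}S\ge 0$ when $R=0$. Together with Remark~\ref{remark1}, this yields $0\le S,E,I,R\le K$ for all feasible trajectories, i.e.\ \eqref{eq:bbbb1}. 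Following Remark~\ref{remark1}, I then replace the incidence $\varphi$ by a function $\psi\in C^2(\R^3;\R)$ that coincides with $\varphi$ on $[0,K]^3$ and is bounded together with its first-order partial derivatives (e.g.\ by multiplying $\varphi$ by a smooth cut-off), and, in the same spirit, replace the factors $S$ and $I$ occurring in the control terms $\mathbbm{V}(t)S(t)$ and $\mathbbm{T}(t)I(t)$ by $\chi(S)$ and $\chi(I)$, where $\chi$ is a fixed bounded Lipschitz function with $\chi(s)=s$ on $[0,K]$. By \eqref{eq:bbbb1} this modified problem has exactly the same feasible pairs and the same optimal pairs as \eqref{seir003}, so it suffices to prove existence for it. Write its dynamics as $f(t,x,u)=\alpha(t,x)+\beta(t,x)u$ with $x=(S,E,I,R)$, $u=(\mathbbm{T},\mathbbm{V})$, and recall $\mathcal L(t,x,u)=\kappa_1 I+\kappa_2\mathbbm{T}^2+\kappa_3\mathbbm{V}^2$.

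Next I would check the hypotheses one by one. Continuity of $f$ and $\mathcal L$ follows from $C1$, $C2$ and the smoothness of $\psi$ and $\chi$. For a): $\Lambda,\beta,\mu,\eta,\varepsilon,\gamma$ are bounded on the compact $[0,t_f]$ and $\psi,\chi$ are bounded, so each component of $\alpha(t,x)$ is the sum of an affine function of $x$ with bounded coefficients and a bounded term, while $\beta(t,x)$ is bounded; hence $\|f(t,x,u)\|\le C_1(1+\|x\|+\|u\|)$. For b): $\alpha(t,\cdot)$ is globally Lipschitz (its non-affine part $\beta(t)\psi$ has bounded gradient) and $\beta(t,\cdot)$ is globally Lipschitz through $\chi$, giving $\|f(t,x_1,u)-f(t,x_2,u)\|\le C_2\|x_1-x_2\|(1+\|u\|)$. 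For c): with $\mathbbm{T}\equiv\mathbbm{V}\equiv 0$ the Cauchy problem has a solution on $[0,t_f]$ by the linear growth just obtained, so $\mathcal F\neq\emptyset$. For d): $U=[0,\tau_{\max}]\times[0,\nu_{\max}]$ is closed. For e): every feasible state satisfies $x(t)\in[0,K]^4$ for all $t$, so in particular $x(t_f)$ lies in the compact set $[0,K]^4$. For f): $U$ is convex, $f$ is affine in $u$ by construction, and $\mathcal L(t,x,\cdot)$ is convex since $\kappa_2,\kappa_3\ge 0$. For g): $U$ is bounded, so $|u|^2\le M:=\tau_{\max}^2+\nu_{\max}^2$ on $U$, and since $I\ge 0$ and $\kappa_i\ge 0$ we have $\mathcal L(t,x,u)\ge 0\ge |u|^2-M$, i.e.\ g) holds with $c_1=1$, $\beta=2$, $c_2=M$.

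Theorem~\ref{teo:existence-Fleming-Raymond-1974} then yields a pair $(x^*,u^*)$ minimizing $J$ over $\mathcal F$ for the modified problem, and by the equivalence above the same quadruple $(S^*,E^*,I^*,R^*)$ and control pair $(\mathbbm{T}^*,\mathbbm{V}^*)$ minimize $\matj$ over $L^1([0,t_f];[0,\tau_{\max}]\times[0,\nu_{\max}])$, which is the assertion. The main obstacle is the first step: one must be certain that the bounds \eqref{eq:bbbb1} hold for \emph{every} feasible trajectory, not merely for an optimizer, so that truncating $\varphi$ and the bilinear control terms outside $[0,K]$ changes neither the feasible set nor the optimal set; once this invariance is in hand, conditions a)--g) are straightforward.
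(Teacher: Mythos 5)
Your proposal is correct and follows essentially the same route as the paper: reduce \eqref{seir003} to the form \eqref{general-problem}, use the a priori bound of Remark~\ref{remark1} to truncate the data, and verify hypotheses a)--g) of Theorem~\ref{teo:existence-Fleming-Raymond-1974}. The only differences are cosmetic refinements (an explicit quasi-positivity argument for $S,E,I,R\ge 0$, the additional truncation $\chi(S),\chi(I)$ of the control-coupling terms, and using boundedness of $U$ instead of $\mathcal{L}\ge\min\{\kappa_2,\kappa_3\}\|u\|^2$ for the coercivity condition g)), which make the paper's implicit steps explicit without changing the argument.
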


\begin{proof}
We show that the problem obtained from \eqref{seir003}
by replacing the function $\phi$ by some of the functions $\psi$
in Remark~\ref{remark1} satisfies the conditions in
Theorem~\ref{teo:existence-Fleming-Raymond-1974}.
By Remark~\ref{remark1}, using conditions C\ref{cond-C1}) and C\ref{cond-C3}),
we immediately obtain~\ref{teo:existence-Fl-Ray-1974-a}) and~\ref{teo:existence-Fl-Ray-1974-b}).
Conditions \ref{teo:existence-Fl-Ray-1974-c}) and~\ref{teo:existence-Fl-Ray-1974-d})
are immediate from the definition of $\mathcal F$ and since $U=[0,\tau_{\max}]\times [0,\nu_{\max}]$.
By~\eqref{eq:bbbb1}, we conclude that all state variables are in the compact set
$$
\{(x,y,z,w) \in (\R_0^+)^4: 0 \le x+y+z+w\le K\}
$$
and condition~\ref{teo:existence-Fl-Ray-1974-e}) follows.
Since the state equations are linearly dependent on the controls,
we obtain~\ref{teo:existence-Fl-Ray-1974-f}). Finally, $\mathcal{L}$
is convex in the controls since it is quadratic. Moreover,
$\mathcal{L} = k_1 I+k_2 \mathbbm{T}^2+k_3\mathbbm{V}^2
\ge \min\{k_2,k_3\}(\mathbbm{T}^2+\mathbbm{V}^2)
\ge \min\{k_2,k_3\}\|(\mathbbm{T},\mathbbm{V})\|^2$
and we establish~\ref{teo:existence-Fl-Ray-1974-g})
with $c_1=\min\{k_2,k_3\}$. Thus, taking into account
Remark~\ref{remark1}, the result follows from
Theorem~\ref{teo:existence-Fleming-Raymond-1974}.
\end{proof}

\vspace{-6pt}
\section{Characterization of the optimal controls}
\label{section:characterization-control}

Now we address the question of how to identify
the solutions predicted by Theorem~\ref{mr:thm:exist}.
We do this with the help of the celebrated Pontryagin
Maximum Principle \cite{book:Pont}. This is possible
because all control minimizers $\mathbbm{T}^*$ and $\mathbbm{V}^*$
of problem \eqref{seir003} are in fact essentially bounded. Indeed,
Theorem~\ref{teo:existence-Fleming-Raymond-1974}
requires $U$ to be a closed set, not necessarily bounded,
and it may happen, in general, that the optimal controls predicted by
Theorem~\ref{teo:existence-Fleming-Raymond-1974} are in $L^1$
but not in $L^\infty$ and do not satisfy the
Pontryagin Maximum Principle \cite{MyID:015}. However,
in our case, $U$ is compact and we can conclude that
the $L^1$ optimal controls $\left(\mathbbm{T}^*, \mathbbm{V}^*\right)$
of problem \eqref{seir003}, predicted by Theorem~\ref{mr:thm:exist},
are in fact in $L^\infty$, as required by the necessary optimality
conditions \cite{book:Pont}. Moreover, our optimal control
problem \eqref{seir003} has only given initial conditions,
with the state variables being free at the terminal time, that is,
$S(t_f)$, $E(t_f)$, $I(t_f)$, $R(t_f)$ are free. This implies
that abnormal minimizers \cite{abnormal} are not possible in our context
and we can fix the cost multiplier associated with
the Lagrangian $\mathcal{L}$ to be one: for our optimal
control problem \eqref{seir003}, the Hamiltonian is given by
\begin{equation}
\label{ham}
\begin{split}
\mathcal H&(t,(S,E,I,R),(p_1,p_2,p_3,p_4),(\mathbbm{T},\mathbbm{V}))
= \kappa_1 I+\kappa_2 \mathbbm{T}^2 + \kappa_3\mathbbm{V}^2\\[1mm]
&+p_1\left[\Lambda\parc{t} -\beta\parc{t}\varphi(S,N,I) -\mu\parc{t} S+\eta\parc{t} R-\mathbbm{V}S \right] \\[1mm]
& + p_2\left[\beta\parc{t} \varphi(S,N,I) -\parc{\mu \parc{t} + \eps \parc{t}} E\right] \nn \\[1mm]
& + p_3\left[\eps\parc{t}E -\parc{\mu\parc{t} +\gamma \parc{t}} I  -\mathbbm{T}I\right] \nn \\[1mm]
& + p_4\left[\gamma \parc{t} I -\mu\parc{t}R -\eta\parc{t} R + \mathbbm{T}I+\mathbbm{V}S\right].
\end{split}
\end{equation}

In what follows, we use the operator $\partial_i$ to denote
the partial derivative with respect to the $i$th variable.

\begin{theorem}[Necessary optimality conditions for the optimal control problem \eqref{seir003}]
\label{thm:Pontryagin-SEIR-Mayer}
If $((S^*,E^*,I^*,R^*),(\mathbbm{T}^*,\mathbbm{V}^*))$ is a minimizer of problem~\eqref{seir003},
then there exist multipliers $(p_1(\cdot),p_2(\cdot),p_3(\cdot),p_4(\cdot))
\in AC([0,t_f];\R^4)$ such that
\begin{equation}
\label{eq:Pontryagin-SEIR-Mayer-1}
\begin{cases}
p_1'= (p_1-p_2)\beta\parc{t}\parc{{\partial_1 \varphi}\parc{S,N,I} 
+ {\partial_2 \varphi}\parc{S,N,I}} + p_1 \parc{\mu\parc{t}+\mathbbm{V}} -p_4\mathbbm{V},\\[1mm]
p_2' = (p_1-p_2)\beta\parc{t}{\partial_2 \varphi}\parc{S,N,I}
+p_2\parc{\mu\parc{t}+\eps\parc{t}}-p_3\eps\parc{t},\\[1mm]
p_3' = p_3\parc{\mu\parc{t}+\gamma \parc{t}+\mathbbm{T}}+\parc{p_1-p_2}\beta \parc{t}
\parc{{\partial_2 \varphi}\parc{S,N,I} + {\partial_3 \varphi}\parc{S,N,I}}\\[1mm]
\quad \quad -p_4 \parc{\gamma \parc{t}+\mathbbm{T}}-\kappa_1,\\[1mm]
p_4' = (p_1-p_2)\beta\parc{t}{\partial_2 \varphi}\parc{S,N,I}
+\mu\parc{t} p_4-\eta\parc{t} p_1 +\eta\parc{t} p_4,
\end{cases}
\end{equation}
for almost all $t \in [0,t_f]$, with transversality conditions
\begin{equation}
\label{eq:Pontryagin-SEIR-Mayer-5}
p_1(t_f)=p_2(t_f)=p_3(t_f)=p_4(t_f)=0.
\end{equation}
Furthermore, the optimal control pair is given by

\ \vspace{-10pt}
\begin{equation}
\label{eq:Pontryagin-SEIR-Mayer-6}
\mathbbm{T}^*=\min\left\{\max\left\{0,\frac{I^*(p_3-p_4)}{2k_2}\right\},\tau_{\max}\right\}
\end{equation}
and
\begin{equation}
\label{eq:Pontryagin-SEIR-Mayer-7}
\mathbbm{V}^*=\min\left\{\max\left\{0,\frac{S^*(p_1-p_4)}{2k_3}\right\},v_{\max}\right\}.
\end{equation}
\end{theorem}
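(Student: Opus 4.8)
The plan is to apply the Pontryagin Maximum Principle (PMP)~\cite{book:Pont} to~\eqref{seir003}, viewed as a Lagrange problem with state $x=(S,E,I,R)$, control $u=(\mathbbm{T},\mathbbm{V})$ ranging in $U=[0,\tau_{\max}]\times[0,\nu_{\max}]$, running cost $\mathcal L=\kappa_1 I+\kappa_2\mathbbm{T}^2+\kappa_3\mathbbm{V}^2$, and Hamiltonian $\mathcal H$ already displayed in~\eqref{ham}. Its applicability in normal form is precisely what the discussion preceding the statement establishes: a minimizer exists by Theorem~\ref{mr:thm:exist}; since $U$ is compact the minimizing controls, a priori only in $L^1$, are in fact essentially bounded, as the necessary conditions require; and because $S(t_f),E(t_f),I(t_f),R(t_f)$ are all free there are no abnormal extremals, so the cost multiplier can be fixed equal to one. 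The smoothness hypothesis C\ref{cond-C2}) guarantees that $\mathcal H$ is $C^1$ in $(S,E,I,R)$ along the optimal trajectory, so the adjoint system below makes sense.

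First I would write the adjoint equations $p_i'=-\partial_{x_i}\mathcal H$ evaluated at $((S^*,E^*,I^*,R^*),p,(\mathbbm{T}^*,\mathbbm{V}^*))$. The only bookkeeping that is not completely mechanical is the chain rule for the composite incidence term $\varphi(S,N,I)$ with $N=S+E+I+R$: differentiation with respect to $S$ produces $\partial_1\varphi+\partial_2\varphi$, with respect to $E$ or $R$ it produces $\partial_2\varphi$, and with respect to $I$ it produces $\partial_2\varphi+\partial_3\varphi$. Propagating these contributions through the five bracketed groups of~\eqref{ham} and collecting the coefficients of $p_1,\dots,p_4$ (and of the explicit $\kappa_1 I$ term in the $I$-derivative) reproduces system~\eqref{eq:Pontryagin-SEIR-Mayer-1} line by line. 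Since the terminal state is entirely unconstrained, the transversality condition of the PMP gives $p_i(t_f)=0$, i.e.~\eqref{eq:Pontryagin-SEIR-Mayer-5}.

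Next I would invoke the minimization condition: for almost every $t$, the pair $(\mathbbm{T}^*(t),\mathbbm{V}^*(t))$ minimizes $u\mapsto\mathcal H(t,x^*(t),p(t),u)$ over $U$. Isolating the control-dependent part of~\eqref{ham} gives $\kappa_2\mathbbm{T}^2+(p_4-p_3)I^*\mathbbm{T}+\kappa_3\mathbbm{V}^2+(p_4-p_1)S^*\mathbbm{V}$, which is separable and, provided $\kappa_2,\kappa_3>0$, strictly convex in each variable. Minimizing the two one-variable quadratics on $\R$ yields the stationary points $\mathbbm{T}=I^*(p_3-p_4)/(2\kappa_2)$ and $\mathbbm{V}=S^*(p_1-p_4)/(2\kappa_3)$; projecting each onto the respective admissible interval — which, for a strictly convex quadratic, amounts to truncating via $\min\{\max\{0,\cdot\},\cdot_{\max}\}$ — gives~\eqref{eq:Pontryagin-SEIR-Mayer-6}--\eqref{eq:Pontryagin-SEIR-Mayer-7}.

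I do not expect a genuine obstacle: the argument is a direct, if slightly tedious, application of the PMP. The two points deserving care are (i) the justification — already sketched before the statement — that the hypotheses of the necessary conditions hold, so that the PMP applies in the normalized form used here, and (ii) keeping the chain-rule contributions of $N$ consistent across all four costate equations, which is the place where sign or index slips are most likely.
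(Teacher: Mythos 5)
Your proposal is correct and follows essentially the same route as the paper: apply the Pontryagin principle in normal form (justified by the free terminal state and compact control set), compute the adjoint system $p_i'=-\partial_{x_i}\mathcal H$ with the chain-rule contributions of $N=S+E+I+R$ exactly as in \eqref{eq:Pontryagin-SEIR-Mayer-1}, read off the transversality conditions, and characterize the controls from pointwise minimization of the Hamiltonian. The only cosmetic difference is that you obtain \eqref{eq:Pontryagin-SEIR-Mayer-6}--\eqref{eq:Pontryagin-SEIR-Mayer-7} by projecting the unconstrained minimizer of the separable convex quadratic onto $[0,\tau_{\max}]\times[0,\nu_{\max}]$, whereas the paper reaches the same truncation formulas through a case analysis on the sets where the optimal controls are interior or at the bounds.
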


\begin{proof}
Direct computations show that equations~\eqref{eq:Pontryagin-SEIR-Mayer-1}
are a consequence of the adjoint system of the Pontryagin Minimum Principle (PMP)
\cite{book:Pont}. Similarly, \eqref{eq:Pontryagin-SEIR-Mayer-5} are
directly given by the transversality conditions of the PMP.
It remains to characterize the controls using the minimality condition
of the PMP \cite{book:Pont}. The minimality condition on the set
$\left\{t \in [0,t_f]: 0<\mathbbm{V}^*(t) < \nu_{\max}
\ \text{ and } \ 0<\mathbbm{T}^*(t) < \tau_{\max} \right\}$
is
$$
\frac{\partial \mathcal H}{\partial \mathbbm{V}^*}
=-p_1S+p_4S+2k_3\mathbbm{V}^* = 0\quad \text{and}
\quad \frac{\partial \mathcal H}{\partial \mathbbm{T}^*}
=-p_3I+p_4I+2k_2\mathbbm{T}^*  = 0
$$
and thus, on this set,
$$
\mathbbm{V}^*=\frac{(p_1-p_4)S}{2k_3}
\quad \text{and} \quad
\mathbbm{T}^*=\frac{(p_3-p_4)I}{2k_2}.
$$
If $t \in \inte \{t \in [t_0,t_1]: \mathbbm{V}^*(t) = \nu_{\max}\}$,
then the minimality condition is
$$
\frac{\partial \mathcal H}{\partial \mathbbm{V}^*}
= -p_1S+p_4S+2k_3\mathbbm{V}^*\leq 0 \quad
\Leftrightarrow \quad \frac{(p_1-p_4)S}{2k_3}\ge \nu_{\max}.
$$
Analogously, if $t \in \inte \{t \in [t_0,t_1]:
\mathbbm{T}^*(t) = \tau_{\max}\}$,
then the minimality condition is
$$
\frac{\partial \mathcal H}{\partial \mathbbm{T}^*}
= -p_3I+p_4I+2k_2\mathbbm{T}^*\leq 0\quad
\Leftrightarrow \quad \frac{(p_3-p_4)I}{2k_2}\ge \tau_{\max}.
$$
If $t \in \inte \{t \in [t_0,t_1]: \mathbbm{V}^*(t) = 0\}$,
then the minimality condition is
$$
\frac{\partial \mathcal H}{\partial \mathbbm{V}^*}
= -p_1S+p_4S+2k_3\mathbbm{V}^*\geq 0 \quad
\Leftrightarrow \quad \frac{(p_1-p_4)S}{2k_3}\le 0.
$$
Analogously, if $t \in \inte \{t \in [t_0,t_1]: \mathbbm{T}^*(t) = 0\}$,
then the minimality condition is
$$
\frac{\partial \mathcal H}{\partial \mathbbm{T}^*}
= -p_3I+p_4I+2k_2\mathbbm{T}^*\geq 0\quad
\Leftrightarrow \quad \frac{(p_3-p_4)I}{2k_2}\le 0.
$$
Therefore, we obtain~\eqref{eq:Pontryagin-SEIR-Mayer-6}
and~\eqref{eq:Pontryagin-SEIR-Mayer-7}.
\end{proof}

\vspace{-10pt}
\section{Uniqueness of the optimal control}
\label{section:Uniqueness-control}

In this section we show that the optimal solution of~\eqref{seir003} is unique.
The result is new even in the particular case where all the parameters of the model
are time-invariant (autonomous case), extending the local uniqueness
results in~\cite{fister1998optimizing,Gaff-Schaefer-MBE-2009},
which are valid only in a small time interval, to uniqueness in a global sense,
that is, uniqueness of solution of \eqref{seir003} along all the time interval $[0, t_f]$
where the optimal control problem is defined. The proof of Theorem~\ref{thm:uniq}
is nontrivial, lengthy and technical. It can be summarized as follows:
\begin{enumerate}
\item By contradiction, we assume that there are two distinct optimal
pairs of state and co-state variables $\xi = (S,E,I,R,p_1,p_2,p_3,p_4)$ and
$\xi^* = (S^*,E^*,I^*,R^*,$ $p_1^*,p_2^*,p_3^*,p_4^*)$,
which correspond to two different optimal controls
$u=\left(\mathbbm{T}, \mathbbm{V}\right)$ and
$u^*=\left(\mathbbm{T}^*, \mathbbm{V}^*\right)$,
in agreement with \eqref{eq:Pontryagin-SEIR-Mayer-6} and
\eqref{eq:Pontryagin-SEIR-Mayer-7}.

\item We show that both minimizing trajectories
are in a positively invariant region $\Gamma$,
which is given by the total population and is
independent on the controls.

\item We make a change of variable and prove that we have a contradiction unless
$\xi = \xi^*$ in a small time interval $[0, T]$; from
\eqref{eq:Pontryagin-SEIR-Mayer-6}--\eqref{eq:Pontryagin-SEIR-Mayer-7},
$u = u^*$ in $[0, T]$.

\item If $[0,T]$ contains the time range of
the optimal control problem \eqref{seir003}, then we are done.
Otherwise, taking for initial conditions at time $T$ the values
of the state trajectories at the right-end of the interval $[0, T]$,
we obtain uniqueness for the interval $[T, 2T]$, because the estimates that
allow us to obtain $T$ are only related with the maximum value of the parameters
and the bounds for the state and co-state variables on the invariant
region $\Gamma$ of the new control problem, and are therefore the same
as the ones already obtained for $[0,T]$

\item Iterating the procedure, we obtain uniqueness throughout the interval $[0, t_f]$
after a finite number of steps.
\end{enumerate}

\begin{theorem}[Uniqueness of solution for the optimal control problem \eqref{seir003}]
\label{thm:uniq}
The solution of the optimal control problem \eqref{seir003} is unique.
\end{theorem}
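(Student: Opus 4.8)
The plan is to argue by contradiction and to combine a weighted $L^2$ (Gr\"onwall-type) estimate on the optimality system of Theorem~\ref{thm:Pontryagin-SEIR-Mayer} with the iteration scheme sketched before the statement. So suppose there are two distinct solutions of that system, $\xi=(S,E,I,R,p_1,p_2,p_3,p_4)$ and $\xi^*=(S^*,E^*,I^*,R^*,p_1^*,p_2^*,p_3^*,p_4^*)$, with controls $u=(\mathbbm{T},\mathbbm{V})$ and $u^*=(\mathbbm{T}^*,\mathbbm{V}^*)$ given by \eqref{eq:Pontryagin-SEIR-Mayer-6}--\eqref{eq:Pontryagin-SEIR-Mayer-7}. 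First I would record the a priori bounds. By Remark~\ref{remark1}, which does not involve the controls, all state components of both solutions lie in the compact set $\Gamma=\{(x,y,z,w)\in(\R_0^+)^4:\ x+y+z+w\le K\}$. The adjoint system \eqref{eq:Pontryagin-SEIR-Mayer-1} is affine in $(p_1,\dots,p_4)$ with coefficients bounded on $[0,t_f]$ --- the parameters are continuous and $\omega$-periodic, each $\partial_i\varphi$ is continuous, hence bounded on $[0,K]^3$ by C\ref{cond-C2}), and the controls take values in $[0,\tau_{\max}]\times[0,\nu_{\max}]$ --- so from the transversality conditions \eqref{eq:Pontryagin-SEIR-Mayer-5} a Gr\"onwall argument gives a constant $M$ with $|p_i|,|p_i^*|\le M$ on $[0,t_f]$. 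Two further elementary facts I would isolate: $\varphi$ and each $\partial_i\varphi$ are Lipschitz on $[0,K]^3$ (because $\varphi\in C^2$), and the feedback maps in \eqref{eq:Pontryagin-SEIR-Mayer-6}--\eqref{eq:Pontryagin-SEIR-Mayer-7} are Lipschitz on $\Gamma\times[-M,M]^4$ (because $s\mapsto\min\{\max\{0,s\},c\}$ is nonexpansive), all with constants depending only on $K$, $M$, and $1/\kappa_2$, $1/\kappa_3$.

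The core step is a short-interval estimate. Fix $\lambda>0$ and perform the classical rescaling $S=\e^{\lambda t}\bar S,\dots,R=\e^{\lambda t}\bar R$ on the states and $p_i=\e^{-\lambda t}\bar p_i$ on the co-states, for both solutions. Writing the state and adjoint equations of $\xi$ and $\xi^*$ in these variables, subtracting, multiplying the four state difference equations by $\bar S-\bar S^*,\dots,\bar R-\bar R^*$ and the four adjoint difference equations by $\bar p_1-\bar p_1^*,\dots,\bar p_4-\bar p_4^*$, and integrating over $[0,T]$, the boundary terms are either nonnegative --- and discarded --- or estimated by the $L^2$-norm of the differences on $[0,T]$ via the common initial data in \eqref{seir003} and the a priori bounds above, so that on the left one retains $\lambda\int_0^{T}\|\bar\xi-\bar\xi^*\|^2$. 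On the right appear integrals of products of differences of right-hand sides against differences of the rescaled unknowns; using the Lipschitz bounds (for $\varphi$, for its partials --- which in \eqref{eq:Pontryagin-SEIR-Mayer-1} multiply the bounded quantities $\beta$ and $p_i-p_j$ --- for the feedback controls, and for the bilinear couplings $\mathbbm{V}S$, $\mathbbm{T}I$), together with $ab\le\tfrac12(a^2+b^2)$ and $\e^{-\lambda t}\le1$, each such term is bounded by $\tilde C\,(1+\e^{2\lambda T})\int_0^{T}\|\bar\xi-\bar\xi^*\|^2$, with $\tilde C$ depending only on the parameter bounds, $K$, $M$, and $1/\kappa_2$, $1/\kappa_3$. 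One is thus left with
\[
\bigl(\lambda-\tilde C(1+\e^{2\lambda T})\bigr)\int_0^{T}\|\bar\xi-\bar\xi^*\|^2\,dt\ \le\ 0 .
\]
Fixing $\lambda$ large compared with $\tilde C$ and then $T$ small enough that $\tilde C(1+\e^{2\lambda T})<\lambda$ --- which is possible with $T$ determined by $\tilde C$ and $\lambda$, hence by the model bounds, alone --- forces $\bar\xi\equiv\bar\xi^*$, i.e. $\xi\equiv\xi^*$, on $[0,T]$, and then $u\equiv u^*$ on $[0,T]$ by \eqref{eq:Pontryagin-SEIR-Mayer-6}--\eqref{eq:Pontryagin-SEIR-Mayer-7}.

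To globalize I would iterate, as in the five-step summary. If $[0,T]\supseteq[0,t_f]$ we are done. Otherwise $\xi(T)=\xi^*(T)$, so on $[T,2T]$ the functions $\xi$ and $\xi^*$ solve the same ordinary differential system --- \eqref{seir003} coupled with \eqref{eq:Pontryagin-SEIR-Mayer-1} after the feedback controls \eqref{eq:Pontryagin-SEIR-Mayer-6}--\eqref{eq:Pontryagin-SEIR-Mayer-7} are substituted in, which is a genuine differential equation since the Hamiltonian \eqref{ham} is quadratic and strictly convex in $(\mathbbm{T},\mathbbm{V})$, and has right-hand side Lipschitz on $\Gamma\times[-M,M]^4$ --- with the same data at $t=T$; repeating the estimate of the previous paragraph (the constants $\tilde C$, $\lambda$, $T$ are unchanged, as they depend only on the parameter bounds and on the bounds on $\Gamma$ and $[-M,M]^4$, which are the same for the shifted problem) gives $\xi=\xi^*$ on $[T,2T]$, hence $\xi(2T)=\xi^*(2T)$. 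After at most $\lceil t_f/T\rceil$ such steps one obtains $\xi=\xi^*$, and therefore $u=u^*$, on all of $[0,t_f]$, contradicting the assumed distinctness.

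I expect the hard part to be the estimate of the second paragraph: organizing the numerous cross terms produced by the general incidence $\varphi$, by its first-order partials in the four adjoint equations, and by the non-smooth feedback controls, so that the resulting constant $\tilde C$ --- and hence the step length $T$ --- does not depend on $t_f$. It is precisely this $t_f$-independence that allows the local estimate to be run unchanged on each of the successive intervals $[0,T],[T,2T],\dots$, and thereby upgrades the local-in-time uniqueness of \cite{fister1998optimizing,Gaff-Schaefer-MBE-2009} to uniqueness on the whole of $[0,t_f]$.
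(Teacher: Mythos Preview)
Your proposal follows essentially the same approach as the paper's own proof: the exponential rescaling of the states by $\e^{\lambda t}$ and of the co-states by $\e^{-\lambda t}$, the energy/Gr\"onwall estimate obtained by differencing, multiplying by the differences and integrating over $[0,T]$, the choice of $\lambda$ large and $T$ small so that the coefficient in front of $\int_0^T\|\bar\xi-\bar\xi^*\|^2$ is positive, and then the iteration in steps of length $T$ using that all constants depend only on the bounds in the invariant region $\Gamma$. The only differences are cosmetic --- your use of the nonexpansiveness of the truncation to get Lipschitz continuity of the feedback controls is a slightly cleaner version of the paper's explicit computations \eqref{majV}--\eqref{majT}, and your exponent $2\lambda T$ versus the paper's $3\alpha T$ is a bookkeeping variation --- so the two arguments coincide.
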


\begin{proof}
Let us assume that we have two optimal solutions corresponding
to state trajectories and adjoint variables $(S,E,I,R)$ and $(p_1,p_2,p_3,p_4)$
and $(\bar S,\bar E,\bar I,\bar R)$ and $(\bar p_1,\bar p_2,\bar p_3,\bar p_4)$.
Then, we show that the two are the same, at least in some small interval.
To achieve this, we make the change of variables
$$
S(t)=e^{\alpha t}s(t), \quad E(t)=e^{\alpha t}e(t),
\quad I(t)=e^{\alpha t}i(t), \quad R(t)=e^{\alpha t}r(t)
$$
and
$$
p_1(t)=e^{-\alpha t}\phi_1(t), \quad p_2(t)=e^{-\alpha t}\phi_2(t),
\quad p_3(t)=e^{-\alpha t}\phi_3(t), \quad p_4(t)=e^{-\alpha t}\phi_4(t).
$$
Naturally, setting $n(t)=s(t)+e(t)+i(t)+r(t)$, we have
$$
N(t)=S(t)+E(t)+I(t)+R(t)=\e^{\alpha t} n(t).
$$
By Proposition~1 in~\cite{Mateus-Silva-AMC-2014}, we can assume
that the trajectories lie in a compact and forward invariant set $\Gamma$,
which can be chosen independently of the control functions
$\mathbbm{T}$ and $\mathbbm{V}$. Using the differentiability
assumption~C\ref{cond-C2}), we get
\begin{equation}
\label{bound_phi}
\begin{split}
|\varphi(A,B,C)-\varphi&(\bar A,\bar B,\bar C)|\\
& \le |\varphi(A,B,C)-\varphi(\bar A,B,C)|+|\varphi(\bar A,B,C)-\varphi(\bar A,\bar B,C)|\\
& \quad +|\varphi(\bar A,\bar B,C)-\varphi(\bar A,\bar B,\bar C)|\\
& \le M_1^u|A-\bar A|+M_2^u|B-\bar B|+M_3^u|C-\bar C|,
\end{split}
\end{equation}
where, because $\Gamma$ is compact, we have
\begin{equation}
\label{eq:maj-partial-i}
M_i^u:=\sup_{x \in \Gamma}|\partial_i \varphi|<+\infty, \quad i=1,2,3.
\end{equation}
Considering the first equation in~\eqref{seir003}, we get, a.e. in $t \in [0,t_f]$, that
$$
\alpha e^{\alpha t}s+e^{\alpha t} \dot{s}
=\Lambda -\beta\varphi(e^{\alpha t}s, e^{\alpha t}n,
e^{\alpha t}i) -\mu e^{\alpha t}s
+ \eta e^{\alpha t}r - \mathbbm{V}e^{\alpha t}s
$$
and thus, for a.a. $t \in [0,t_f]$,
$$
\alpha s + \dot{s}=\frac {\Lambda}{e^{\alpha t}}
-\frac {\beta}{e^{\alpha t}}\varphi(e^{\alpha t}s,
e^{\alpha t}n, e^{\alpha t}i) -\mu s
+ \eta r - \mathbbm{V} s.
$$
Subtracting from the above equation the corresponding barred equation, we obtain
\[
\begin{split}
\alpha (s-\bar{s}) + (\dot{s}-\dot{\bar {s}})
& = -\frac {\beta}{e^{\alpha t}}(\varphi(e^{\alpha t}s, e^{\alpha t}n, e^{\alpha t}i)
-\varphi(e^{\alpha t}\bar{s}, e^{\alpha t}\bar{n}, e^{\alpha t}\bar{i})) -\mu (s-\bar{s})\\
& \quad + \eta (r-\bar{r}) - (\mathbbm{V} s-\bar{\mathbbm{V}}\bar{s}).
\end{split}
\]
Multiplying by $(s-\bar{s})$, integrating from $0$ to $T$,
and noting that $s(0)=\bar{s}(0)$,
\begin{equation}
\label{eq:s-bar-s...}
\begin{split}
&\frac{1}{2}(s(T)-\bar {s}(T))^2 + \alpha \int_0^T (s-\bar{s})^2 dt \\
& = -\int_0^T \frac {\beta}{e^{\alpha t}}(s-\bar{s})(\varphi(e^{\alpha t}s,
e^{\alpha t}n, e^{\alpha t}i) - \varphi(e^{\alpha t}\bar{s},
e^{\alpha t}\bar{n}, e^{\alpha t}\bar{i}))dt\\
& \quad -\int_0^T \mu (s-\bar{s})^2 dt
+ \int_0^T \eta (s-\bar{s})(r-\bar{r}) dt
- \int_0^T  (\mathbbm{V} s-\bar {\mathbbm{V}} \bar{s})(s-\bar{s}) dt
\end{split}
\end{equation}
and, by~\eqref{bound_phi}, we obtain
\begin{equation}
\label{eq:s-bar-s...2}
\begin{split}
&\frac{1}{2}(s(T)-\bar {s}(T))^2 + \alpha \int_0^T (s-\bar{s})^2 dt \\
& \le \int_0^T \frac{\beta}{e^{\alpha t}} |s-\bar s|
(M_1^u|e^{\alpha t}s-e^{\alpha t}\bar s|+M_2^u|e^{\alpha t}n-e^{\alpha t}\bar n|
+M_3^u|e^{\alpha t}i-e^{\alpha t}\bar{i}|)dt\\
& \quad -\int_0^T \mu (s-\bar{s})^2 dt
+ \int_0^T \eta (s-\bar{s})(r-\bar{r}) dt
- \int_0^T  (\mathbbm{V} s-\bar {\mathbbm{V}} \bar{s})(s-\bar{s}) dt\\
&  = \int_0^T \beta M_1^u(s-\bar s)^2 dt+\int_0^T \beta M_2^u|s
-\bar s||n-\bar n| dt+\int_0^T \beta M_3^u|s-\bar s||i-\bar{i}|dt\\
& \quad -\int_0^T \mu (s-\bar{s})^2 dt
+ \int_0^T \eta (s-\bar{s})(r-\bar{r}) dt
- \int_0^T  (\mathbbm{V} s-\bar {\mathbbm{V}} \bar{s})(s-\bar{s}) dt\\
& \le C_1 \int_0^T (s-\bar s)^2+(i-\bar{i})^2+(e-\bar{e})^2+(r-\bar{r})^2 dt
+ K_1\e^{\alpha T}\int_0^T  (\mathbbm{V}-\bar {\mathbbm{V}})^2 dt,
\end{split}
\end{equation}
where $K_1$ depends on the bounds for $\bar S$ and $\mathbbm{V}$
and $C_1=\beta^u M_1^u+2\beta^u M_2^u+\beta^u M_3^u+\eta^u+2K_1$
(recall that $M_i^u$ is given by~\eqref{eq:maj-partial-i}).
We now use some estimates for $(\mathbbm{V}-\bar {\mathbbm{V}})^2$
and $(\mathbbm{T}-\bar {\mathbbm{T}})^2$ that we prove later.
Namely, we have
\begin{equation}
\label{eq:bound-V-Vbar}
\left(\mathbbm{V}-\bar {\mathbbm{V}}\right)^2
\le C_9\e^{2\alpha T}
\left[(s-\bar s)^2+(\phi_1-\bar{\phi}_1)^2+(\phi_4-\bar{\phi}_4)^2\right],
\end{equation}
where $C_9$ depends on $\alpha$ and on the maximum of $S$,
$\bar p_1$ and $\bar p_4$ on $\Gamma$, and
\begin{equation}
\label{eq:bound-T-Tbar}
\left(\mathbbm{T}-\bar {\mathbbm{T}}\right)^2
\le C_{10}\e^{2\alpha T}
\left[(i-\bar i)^2+(\phi_3-\bar{\phi}_3)^2+(\phi_4-\bar{\phi}_4)^2\right],
\end{equation}
where $C_{10}$ depends on $\alpha$ and on the maximum 
of $I$, $\bar p_3$ and $\bar p_4$ on $\Gamma$
(see~\eqref{majV} and~\eqref{majT}).
Define
$$
\Psi(t)=(s(t)-\bar s(t))^2+(e(t)-\bar e(t))^2+(i(t)-\bar i(t))^2+(r(t)-\bar r(t))^2
$$
and
$$
\Phi(t)=(\phi_1(t)-\bar \phi_1(t))^2+(\phi_2(t)
-\bar \phi_2(t))^2+(\phi_3(t)-\bar \phi_3(t))^2
+(\phi_4(t)-\bar \phi_4(t))^2.
$$
Observe that $\Psi(t)\ge0$ and $\Phi(t)\ge0$ for all $t$.
By~\eqref{eq:bound-V-Vbar}, we obtain
\begin{equation*}
\begin{split}
\frac{1}{2}&(s(T)-\bar {s}(T))^2 + \alpha \int_0^T (s-\bar{s})^2 dt \\
& \le C_1 \int_0^T (s-\bar s)^2+(i-\bar{i})^2+(e-\bar{e})^2+(r-\bar{r})^2 dt\\
& \quad + K_1C_9\int_0^T (s-\bar s)^2 +(\phi_1-\bar \phi_1)^2+(\phi_4-\bar \phi_4)^2 dt
\end{split}
\end{equation*}
\begin{equation}
\label{eq:s-bar-s}
\begin{split}
& \le (C_1+K_1C_9) \int_0^T (s-\bar s)^2+(i-\bar{i})^2+(e-\bar{e})^2+(r-\bar{r})^2 \\
& \quad +(\phi_1-\bar \phi_1)^2+(\phi_4-\bar \phi_4)^2 dt\\
& \le \left(C_1+K_1C_9\e^{3\alpha T}\right) \int_0^T \Phi(t)+\Psi(t) dt.
\end{split}
\end{equation}
By similar computations to the ones in~\eqref{eq:s-bar-s...}
and~\eqref{eq:s-bar-s...2}, and using the inequality $xy\le x^2+y^2$, we get
\begin{equation}
\label{eq:e-bar-e}
\frac{1}{2}(e(T)-\bar {e}(T))^2 + \alpha \int_0^T (e-\bar{e})^2 dt
\le C_2 \int_0^T \Phi(t)+\Psi(t) dt,
\end{equation}
where $C_2=\beta^u M_1^u+2\beta^u M_2^u+\beta^u M_3^u$. Recalling that
$$
(xy-\bar x\bar y)(w-\bar w) \le C ((x-\bar x)^2+(y-\bar y)^2+(w-\bar w)^2),
$$
where $C>0$ depends on the bounds for $\bar x$ and $y$,
from the third equation in~\eqref{seir003}, by similar computations
to the ones in~\eqref{eq:s-bar-s...} and~\eqref{eq:s-bar-s...2},
and using~\eqref{eq:bound-T-Tbar}, we conclude that
\begin{equation}
\label{eq:i-bar-i}
\frac{1}{2}(i(T)-\bar {i}(T))^2 + \alpha \int_0^T (i-\bar{i})^2 dt
\le \left(C_3+K_2\left(C_{10}+2\right)\e^{3\alpha T}\right)
\int_0^T \Phi(t)+\Psi(t) dt,
\end{equation}
where $K_2$ depends on the maximum of $\bar I$
and $\mathbbm{T}$ on $\Gamma$ and $C_3=\eps^u$.
From the fourth equation in~\eqref{seir003}, by similar computations
to the ones in~\eqref{eq:s-bar-s...} and~\eqref{eq:s-bar-s...2},
and using~\eqref{eq:bound-V-Vbar} and~\eqref{eq:bound-T-Tbar},
we conclude that
\begin{multline}
\label{eq:r-bar-r}
\frac{1}{2}\left(r(T)-\bar {r}(T)\right)^2 + \alpha \int_0^T (r-\bar{r})^2 dt\\
\le \left[C_4+(K_3(C_{10}+1)\e^{3\alpha T}+K_4(C_9+1))\e^{3\alpha T}\right]
\int_0^T \Phi(t)+\Psi(t) dt,
\end{multline}
where $K_3$ and $K_4$ depend on the bounds for $\bar I$, $\bar S$,
$\mathbbm{T}$ and $\mathbbm{V}$ and $C_4=\gamma^u$. From the first
equation of \eqref{eq:Pontryagin-SEIR-Mayer-1}, we get that
\[
\begin{split}
&-\alpha e^{-\alpha t}\phi_1+e^{-\alpha t} \dot{\phi}_1\\
& = e^{-\alpha t}(\phi_1-\phi_2)\beta \parc{{\partial_1 \varphi}
\parc{e^{\alpha t}s,e^{\alpha t}n,e^{\alpha t}i}
+ {\partial_2 \varphi}\parc{e^{\alpha t}s,e^{\alpha t}n,e^{\alpha t}i}}\\
& \quad + e^{-\alpha t}\phi_1\parc{\mu+\mathbbm{V}}-e^{-\alpha t}\phi_4\mathbbm{V}
\end{split}
\]
for a.a. $t \in [0,t_f]$ and thus
\begin{equation}
\label{eqp1}
\begin{split}
-\alpha \phi_1 + \dot{\phi}_1
& = (\phi_1-\phi_2)\beta \parc{{\partial_1 \varphi}\parc{e^{\alpha t}s,
e^{\alpha t}n,e^{\alpha t}i} + {\partial_2 \varphi}\parc{e^{\alpha t}s,
e^{\alpha t}n,e^{\alpha t}i}} \\
& \quad + \phi_1\parc{\mu+\mathbbm{V}}-\phi_4\mathbbm{V}
\end{split}
\end{equation}
for a.a. $t \in [0,t_f]$. Using~C\ref{cond-C2}), we get
\begin{align*}
& |\phi_j {\partial_i \varphi}\parc{e^{\alpha t}s,e^{\alpha t}n,
e^{\alpha t}i} - \bar{\phi}_j {\partial_i \varphi}\parc{e^{\alpha t}\bar{s},
e^{\alpha t}\bar{n},e^{\alpha t}\bar{i}}| \\
& \le |\phi_j {\partial_i \varphi}\parc{e^{\alpha t}s,
e^{\alpha t}n,e^{\alpha t}i} - \phi_j {\partial_i \varphi}\parc{e^{\alpha t}\bar{s},
e^{\alpha t}\bar{n},e^{\alpha t}\bar{i}}|\\
&\quad + |\phi_j {\partial_i \varphi}\parc{e^{\alpha t}\bar{s},
e^{\alpha t}\bar{n},e^{\alpha t}\bar{i}}- \bar{\phi}_j
{\partial_i \varphi}\parc{e^{\alpha t}\bar{s},
e^{\alpha t}\bar{n},e^{\alpha t}\bar{i}}| \\
&= \phi_j |{\partial_i \varphi}\parc{e^{\alpha t}s,
e^{\alpha t}n,e^{\alpha t}i} - {\partial_i \varphi}\parc{e^{\alpha t}\bar{s},
e^{\alpha t}\bar{n},e^{\alpha t}\bar{i}}|\\&\quad + |\phi_j
- \bar{\phi}_j| |\partial_i \varphi \parc{e^{\alpha t}\bar{s},
e^{\alpha t}\bar{n},e^{\alpha t}\bar{i}}|\\
&= \phi_j \left [|\partial_i \varphi\parc{e^{\alpha t}s,
e^{\alpha t}n,e^{\alpha t}i} - {\partial_i \varphi}\parc{e^{\alpha t}\bar{s},
e^{\alpha t}n,e^{\alpha t}i}|+|\partial_i \varphi\parc{e^{\alpha t}\bar{s},
e^{\alpha t}n,e^{\alpha t}i} \right. \\
&\quad \left. - {\partial_i \varphi}\parc{e^{\alpha t}\bar{s},
e^{\alpha t}\bar{n},e^{\alpha t}i}|+|\partial_i \varphi\parc{e^{\alpha t}\bar{s},
e^{\alpha t}\bar{n},e^{\alpha t}i}-\partial_i \varphi\parc{e^{\alpha t}\bar{s},
e^{\alpha t}\bar{n},e^{\alpha t}\bar{i}}| \right ]\\
&\quad + |\phi_j- \bar{\phi}_j| |\partial_i \varphi\parc{e^{\alpha t}\bar{s},
e^{\alpha t}\bar{n},e^{\alpha t}\bar{i}}|
\end{align*}
\[
\begin{split}
&\le \phi_j [M_{i1}^u |e^{\alpha t}s-e^{\alpha t}\bar{s}|+M_{i2}^u|e^{\alpha t}n
-e^{\alpha t}\bar{n}|+M_{i3}^u|e^{\alpha t}i-e^{\alpha t}\bar {i}|]\\
& \quad + |\phi_j- \bar{\phi}_j| |\partial_i \varphi\parc{e^{\alpha t}\bar{s},
e^{\alpha t}\bar{n},e^{\alpha t}\bar{i}}|\\
& \le \phi_j^u [M_{i1}^u e^{\alpha t}|s-\bar{s}|+M_{i2}^u e^{\alpha t}|n
-\bar{n}|+M_{i3}^u e^{\alpha t}|i-\bar {i}|] + M_i^u |\phi_j- \bar{\phi}_j|,
\end{split}
\]
where, by C\ref{cond-C1}) and since $\Gamma$ is compact, we have
\begin{equation}
\label{eq:maj-partial-ij}
M_{ij}^u:=\sup_{x \in \Gamma} \left|\partial_j\partial_i\varphi(x)\right|
< +\infty, \quad i,j \in \{1,2,3\}.
\end{equation}
Subtracting from equation~\eqref{eqp1} the corresponding barred equation,
we get
\[
\begin{split}
&-\alpha \phi_1 + \dot{\phi}_1 +\alpha \bar{\phi}_1 - \dot{\bar{\phi}}_1\\
&= \beta(\phi_1-\phi_2) \parc{{\partial_1 \varphi}\parc{e^{\alpha t}s,
e^{\alpha t}n,e^{\alpha t}i} + {\partial_2 \varphi}\parc{e^{\alpha t}s,
e^{\alpha t}n,e^{\alpha t}i}} + \phi_1\parc{\mu+\mathbbm{V}}-\phi_4\mathbbm{V}\\
& -\beta(\bar\phi_1-\bar\phi_2) \parc{{\partial_1 \varphi}\parc{e^{\alpha t}\bar s,
e^{\alpha t}\bar n,e^{\alpha t}\bar i} + {\partial_2 \varphi}\parc{e^{\alpha t}\bar s,
e^{\alpha t}\bar n,e^{\alpha t}\bar i}} - \bar\phi_1\parc{\mu
+\bar{\mathbbm{V}}}+\bar\phi_4\bar{\mathbbm{V}}\\
&= \beta\left[\phi_1\parc{{\partial_1 \varphi}\parc{e^{\alpha t}s,
e^{\alpha t}n,e^{\alpha t}i} + {\partial_2 \varphi}\parc{e^{\alpha t}s,
e^{\alpha t}n,e^{\alpha t}i}}\right.\\
&\left. \quad -\bar\phi_1 \parc{{\partial_1 \varphi}\parc{e^{\alpha t}\bar s,
e^{\alpha t}\bar n,e^{\alpha t}\bar i} + {\partial_2 \varphi}\parc{e^{\alpha t}\bar s,
e^{\alpha t}\bar n,e^{\alpha t}\bar i}}\right]\\
&\quad -\beta\left[\phi_2 \parc{{\partial_1 \varphi}\parc{e^{\alpha t}s,
e^{\alpha t}n,e^{\alpha t}i} + {\partial_2 \varphi}\parc{e^{\alpha t}s,
e^{\alpha t}n,e^{\alpha t}i}}\right.\\
&\left. \quad -\bar\phi_2 \parc{{\partial_1 \varphi}\parc{e^{\alpha t}\bar s,
e^{\alpha t}\bar n,e^{\alpha t}\bar i} + {\partial_2 \varphi}\parc{e^{\alpha t}\bar s,
e^{\alpha t}\bar n,e^{\alpha t}\bar i}}\right]\\
&\quad + \mu(\phi_1- \bar\phi_1)+ \phi_1\mathbbm{V}
- \bar\phi_1\bar{\mathbbm{V}}-\phi_4\mathbbm{V}+\bar\phi_4\bar{\mathbbm{V}}.
\end{split}
\]
Multiplying by $\phi_1-\bar{\phi}_1$ and integrating from $0$ to $T$, we obtain
\begin{equation}
\label{eq:another-estimate1}
\begin{split}
&-\frac{1}{2}(\phi_1(0)-\bar {\phi}_1(0))^2 - \alpha \int_0^T (\phi_1-\bar{\phi}_1)^2 dt\\
& = \int_0^T \beta (\phi_1-\bar{\phi}_1) \left[\phi_1 \parc{{\partial_1 \varphi}\parc{e^{\alpha t}s,
e^{\alpha t}n,e^{\alpha t}i} + {\partial_2 \varphi}\parc{e^{\alpha t}s,
e^{\alpha t}n,e^{\alpha t}i}} \right.\\
& \left. \quad - \bar{\phi}_1 \parc{{\partial_1 \varphi}\parc{e^{\alpha t}\bar{s},
e^{\alpha t}\bar{n},e^{\alpha t}\bar{i}} + {\partial_2 \varphi}\parc{e^{\alpha t}\bar{s},
e^{\alpha t}\bar{n},e^{\alpha t}\bar{i}}}\right]dt\\
& \quad - \int_0^T \beta (\phi_1-\bar{\phi}_1) \left[\phi_2
\parc{{\partial_1 \varphi}\parc{e^{\alpha t}s,e^{\alpha t}n,e^{\alpha t}i}
+ {\partial_2 \varphi}\parc{e^{\alpha t}s,e^{\alpha t}n,e^{\alpha t}i}}\right.\\
&\left. \quad -\bar{\phi}_2 \parc{{\partial_1 \varphi}\parc{e^{\alpha t}\bar{s},
e^{\alpha t}\bar{n},e^{\alpha t}\bar{i}} + {\partial_2 \varphi}\parc{e^{\alpha t}\bar{s},
e^{\alpha t}\bar{n},e^{\alpha t}\bar{i}}}\right]dt\\
& \quad + \int_0^T \mu (\phi_1-\bar{\phi}_1)^2dt
+ \int_0^T (\phi_1-\bar{\phi}_1)(\phi_1\mathbbm{V}
- \bar\phi_1\bar{\mathbbm{V}}) dt\\
& \quad - \int_0^T (\phi_1-\bar{\phi}_1)(\phi_4\mathbbm{V}
-\bar\phi_4\bar{\mathbbm{V}}) dt.
\end{split}
\end{equation}
Multiplying~\eqref{eq:another-estimate1} by $-1$, we obtain that
\[
\begin{split}
&\frac{1}{2}(\phi_1(0)-\bar {\phi}_1(0))^2 + \alpha \int_0^T (\phi_1-\bar{\phi}_1)^2 dt\\
& \le \beta^u\int_0^T |\phi_1-\bar{\phi}_1| \left[|\phi_1 \partial_1 \varphi\parc{e^{\alpha t}s,
e^{\alpha t}n,e^{\alpha t}i} - \bar{\phi}_1 \partial_1 \varphi\parc{e^{\alpha t}\bar{s},
e^{\alpha t}\bar{n},e^{\alpha t}\bar{i}}|\right.\\
&\left. \quad + |\phi_1\partial_2 \varphi\parc{e^{\alpha t}s,e^{\alpha t}n,
e^{\alpha t}i} - \bar{\phi}_1 \partial_2 \varphi\parc{e^{\alpha t}\bar{s},
e^{\alpha t}\bar{n},e^{\alpha t}\bar{i}}|\right]dt\\
& \quad + \beta^u \int_0^T |\phi_1-\bar{\phi}_1| \left[|\phi_2
\partial_1 \varphi\parc{e^{\alpha t}s,e^{\alpha t}n,e^{\alpha t}i}
- \bar{\phi}_2 \partial_1 \varphi\parc{e^{\alpha t}\bar{s},
e^{\alpha t}\bar{n},e^{\alpha t}\bar{i}}|\right.\\
&\left. \quad + |\phi_2 \partial_2 \varphi\parc{e^{\alpha t}s,
e^{\alpha t}n,e^{\alpha t}i} - \bar{\phi}_2 {\partial_2
\varphi}\parc{e^{\alpha t}\bar{s},e^{\alpha t}\bar{n},
e^{\alpha t}\bar{i}}|\right]dt\\
& \quad - \int_0^T (\phi_1-\bar{\phi}_1)(\phi_1\mathbbm{V}
- \bar\phi_1\bar{\mathbbm{V}}) dt+ \int_0^T
(\phi_1-\bar{\phi}_1)(\phi_4\mathbbm{V}-\bar\phi_4\bar{\mathbbm{V}}) dt
\end{split}
\]
and thus, by~\eqref{eq:maj-partial-i}
and \eqref{eq:maj-partial-ij}, we conclude that
\[
\begin{split}
&\frac{1}{2}(\phi_1(0)-\bar {\phi}_1(0))^2
+ \alpha \int_0^T (\phi_1-\bar{\phi}_1)^2 dt\\
&\le \beta^u\int_0^T |\phi_1-\bar{\phi}_1| \left[\phi_1^u (M_{11}^u
e^{\alpha t}|s-\bar{s}|+M_{12}^u e^{\alpha t}|n-\bar{n}|
+M_{13}^u e^{\alpha t}|i-\bar{i}|) \right.\\
& \quad + M_1^u|\phi_1-\bar{\phi}_1|+ \phi_1^u (M_{21}^u
e^{\alpha t}|s-\bar{s}|+M_{22}^u e^{\alpha t}|n-\bar{n}|
+M_{23}^u e^{\alpha t}|i-\bar{i}|)\\
&\left. \quad + M_2^u|\phi_1-\bar{\phi}_1|\right]dt
+ \beta^u \int_0^T |\phi_1-\bar{\phi}_1| \left[\phi_2^u
\left(M_{11}^u e^{\alpha t}|s-\bar{s}|+M_{12}^u e^{\alpha t}|n-\bar{n}|\right)\right.\\
& \quad +M_{13}^u e^{\alpha t}|i-\bar{i}|)+ M_1^u|\phi_2
-\bar{\phi}_2|+\phi_2^u \left(M_{21}^u e^{\alpha t}|s
-\bar{s}|+M_{22}^u e^{\alpha t}|n-\bar{n}|\right.\\
&\left. \quad +M_{23}^u e^{\alpha t}|i-\bar{i}|)
+ M_2^u|\phi_2-\bar{\phi}_2|\right]dt
- \int_0^T (\phi_1-\bar{\phi}_1)(\phi_1\mathbbm{V}
- \bar\phi_1\bar{\mathbbm{V}}) dt\\
& \quad+ \int_0^T (\phi_1-\bar{\phi}_1)(\phi_4\mathbbm{V}
-\bar\phi_4\bar{\mathbbm{V}}) dt.
\end{split}
\]
Finally, we have
\[
\begin{split}
&\frac{1}{2}(\phi_1(0)-\bar {\phi}_1(0))^2
+ \alpha \int_0^T (\phi_1-\bar{\phi}_1)^2 dt\\
&\le \beta^u \phi_1^u e^{\alpha T} \left((M_{11}^u+M_{21}^u)
\int_0^T (\phi_1-\bar{\phi}_1)^2+ (s-\bar{s})^2 dt +(M_{12}^u+M_{22}^u) \right.\\
&\quad \left.\times \int_0^T (\phi_1-\bar{\phi}_1)^2 + (n-\bar{n})^2 dt
+ (M_{13}+M_{23})^u \int_0^T (\phi_1-\bar{\phi}_1)^2 + (i-\bar{i})^2 dt\right)\\
&\quad + \beta^u(M_1^u + M_2^u)\int_0^T (\phi_1-\bar{\phi}_1)^2dt
+ \beta^u \phi_2^u e^{\alpha T} \left((M_{11}^u+M_{21}^u)
\int_0^T (\phi_1-\bar{\phi}_1)^2 \right.\\
&\left.\quad + (s-\bar{s})^2 dt+(M_{12}^u+M_{22}^u)
\int_0^T (\phi_1-\bar{\phi}_1)^2 + (n-\bar{n})^2 dt
+ (M_{13}+M_{23})^u \right.\\
& \quad \left.\times \int_0^T (\phi_1-\bar{\phi}_1)^2
+ (i-\bar{i})^2 dt\right) + \beta^u(M_1^u + M_2^u)
\int_0^T (\phi_1-\bar{\phi}_1)^2+(\phi_2-\bar{\phi}_2)^2dt\\
& \quad + \int_0^T K_5[(\mathbbm{V}-\bar{\mathbbm{V}}|)^2
+2(\phi_1-\bar{\phi}_1)^2] dt\\&\quad \quad + \int_0^T K_6[(\mathbbm{V}
-\bar{\mathbbm{V}})^2+(\phi_4-\bar\phi_4)^2
+(\phi_1-\bar{\phi}_1)^2] dt
\end{split}
\]
\[
\begin{split}
& \le C_5 \e^{\alpha T} \int_0^T (s-\bar{s})^2+(e-\bar{e})^2+(i-\bar{i})^2
+(r-\bar{r})^2+(\phi_1-\bar{\phi}_1)^2+(\phi_2-\bar{\phi}_2)^2\\
&\quad +(\phi_4-\bar{\phi}_4)^2 dt + (K_5+K_6)
\int_0^T  (\mathbbm{V}-\bar {\mathbbm{V}})^2 dt,
\end{split}
\]
where $K_5$ and $K_6$ depend on the bounds for
$\bar p_1$, $\bar p_4$ and $\mathbbm{V}$ and
\[
\begin{split}
& C_5=\beta^u \phi_1^u \left((M_{11}^u+M_{21}^u) +2(M_{12}^u+M_{22}^u)+(M_{13}+M_{23})^u\right)\\
&\quad \quad +\beta^u \phi_2^u \left((M_{11}^u+M_{21}^u)+2(M_{12}^u+M_{22}^u)+(M_{13}+M_{23})^u\right)\\
&\quad \quad + 2\beta^u(M_1^u + M_2^u)+2K_5+K_6.
\end{split}
\]
By~\eqref{eq:bound-V-Vbar} and~\eqref{eq:bound-T-Tbar}, we obtain
\begin{equation}
\label{eq:phi1-bar-phi1}
\begin{split}
&\frac{1}{2}(\phi_1(0)-\bar {\phi}_1(0))^2 + \alpha \int_0^T (\phi_1-\bar{\phi}_1)^2 dt\\
& \le C_5 \e^{\alpha T} \int_0^T (s-\bar{s})^2
+(e-\bar{e})^2+(i-\bar{i})^2+(r-\bar{r})^2+(\phi_1-\bar{\phi}_1)^2\\
& \quad +(\phi_2-\bar{\phi}_2)^2+(\phi_4-\bar{\phi}_4)^2 dt\\
&\quad + (K_5+K_6)C_9\e^{2\alpha T}\int_0^T (s-\bar s)^2
+(\phi_1-\bar \phi_1)^2+(\phi_4-\bar \phi_4)^2 dt\\
& \le (C_5\e^{\alpha T}+(K_5+K_6)C_9\e^{2\alpha T})
\int_0^T (s-\bar{s})^2+(e-\bar{e})^2+(i-\bar{i})^2+(r-\bar{r})^2\\
&\quad +(\phi_1-\bar{\phi}_1)^2+(\phi_2-\bar{\phi}_2)^2+(\phi_4-\bar{\phi}_4)^2 dt\\
& \le (C_5+(K_5+K_6)C_9)\e^{2\alpha T} \int_0^T \Phi(t)+\Psi(t) dt.
\end{split}
\end{equation}
On the other hand, from the second equation
of \eqref{eq:Pontryagin-SEIR-Mayer-1}, one has
$-\alpha e^{-\alpha t}\phi_2+e^{-\alpha t} \dot{\phi}_2
= e^{-\alpha t}(\phi_1-\phi_2)\beta {\partial_2 \varphi}
\parc{e^{\alpha t}s,e^{\alpha t}n,e^{\alpha t}i}
+e^{-\alpha t}\phi_2\parc{\mu +\eps} - e^{-\alpha t}\phi_3\eps$.
Straightforward computations show that
\begin{equation}
\label{eq:phi2-bar-phi2}
\begin{split}
\frac{1}{2}(\phi_2(0)-\bar {\phi}_2(0))^2
+ \alpha \int_0^T (\phi_2-\bar{\phi}_2)^2 dt
& \le \left(C_6 e^{\alpha t}+ \eps^u\right) \int_0^T \Phi(t)+\Psi(t) dt,
\end{split}
\end{equation}
where 
\[
\begin{split}
& C_6=\beta^u \phi_1^u \left(M_{21}^u+2M_{22}+M_{23}^u\right)
+\beta^u \phi_2^u \left(M_{21}^u+2M_{22}^u+M_{23}^u\right)
+2\beta^uM_2^u.
\end{split}
\]
From the third equation of \eqref{eq:Pontryagin-SEIR-Mayer-1}, we conclude that
\[
\begin{split}
-\alpha e^{-\alpha t}\phi_3+e^{-\alpha t} \dot{\phi}_3
& = e^{-\alpha t}\phi_3\parc{\mu+\gamma+\mathbbm{T}}
+e^{-\alpha t}(\phi_1-\phi_2)\beta ({\partial_2 \varphi}
\parc{e^{\alpha t}s,e^{\alpha t}n,e^{\alpha t}i}\\
&\quad + {\partial_3 \varphi}\parc{e^{\alpha t}s,
e^{\alpha t}n,e^{\alpha t}i})
-e^{-\alpha t}\phi_4 (\gamma+\mathbbm{T})-\kappa_1
\end{split}
\]
and, by similar computations as the ones done for
the first equation of \eqref{eq:Pontryagin-SEIR-Mayer-1},
\begin{equation}
\label{eq:phi3-bar-phi3}
\begin{split}
\frac{1}{2}&(\phi_3(0)-\bar {\phi}_3(0))^2
+ \alpha \int_0^T (\phi_3-\bar{\phi}_3)^2 dt\\
& \le (C_7+(K_7+K_8)C_{10})\e^{2 \alpha t} \int_0^T \Phi(t)+\Psi(t) dt,
\end{split}
\end{equation}
where $K_7$ and $K_8$ depend on the bounds
for $\bar p_3$, $\bar p_4$ and $\mathbbm{T}$ and
\[
\begin{split}
& C_7=\beta^u \phi_1^u \left((M_{21}^u+M_{31}^u)
+2(M_{22}^u+M_{32}^u)+(M_{23}+M_{33})^u\right)\\
&\quad \quad +\beta^u \phi_2^u \left((M_{21}^u+M_{31}^u)
+2(M_{22}^u+M_{32}^u)+(M_{23}+M_{33})^u\right)\\
&\quad \quad +2\beta^u(M_2^u + M_3^u)+\gamma^u+2K_7+K_8.
\end{split}
\]
From the fourth equation of \eqref{eq:Pontryagin-SEIR-Mayer-1},
\begin{equation*}
\begin{split}
-\alpha &e^{-\alpha t}\phi_4+e^{-\alpha t} \dot{\phi}_4\\
&= e^{-\alpha t}(\phi_1-\phi_2)\beta {\partial_2 \varphi}
\parc{e^{\alpha t}s,e^{\alpha t}n,e^{\alpha t}i}
+e^{-\alpha t}\phi_4\parc{\mu +\eta} - e^{-\alpha t}\phi_1\eta
\end{split}
\end{equation*}
and, by similar computations as the ones done for
the second equation of \eqref{eq:Pontryagin-SEIR-Mayer-1},
\begin{equation}
\label{eq:phi4-bar-phi4}
\begin{split}
\frac{1}{2}(\phi_4(0)-\bar {\phi}_4(0))^2
+ \alpha \int_0^T (\phi_4-\bar{\phi}_4)^2 dt
& \le \left(C_8 e^{\alpha t}+\eta^u\right) \int_0^T \Phi(t)+\Psi(t) dt,
\end{split}
\end{equation}
where
\[
\begin{split}
& C_8=\beta^u \phi_1^u \left(M_{21}^u+2M_{22}+M_{23}^u\right)
+\beta^u \phi_2^u \left(M_{21}^u+2M_{22}^u+M_{23}^u\right)
+2\beta^uM_2^u.
\end{split}
\]
We now obtain the bounds
for $\parc{\mathbbm{V}-\bar{\mathbbm{V}}}^2$
and $\parc{\mathbbm{T}-\bar{\mathbbm{T}}}^2$ announced
in~\eqref{eq:bound-V-Vbar} and~\eqref{eq:bound-T-Tbar}. We have
\[
\begin{split}
\parc{\mathbbm{V}-\bar{\mathbbm{V}}}^2
& = \left(\frac{e^{\alpha t}s}{2\kappa_3}(e^{-\alpha t}\phi_1
-e^{-\alpha t}\phi_4) -\frac{e^{\alpha t}\bar s}{2\kappa_3}
(e^{-\alpha t}\bar\phi_1-e^{-\alpha t}\bar\phi_4)\right)^2\\
& = \frac{1}{4\kappa_3^2}\left(s\phi_1-s\phi_4-\bar s\bar\phi_1
+\bar s\bar\phi_4\right)^2\\
& = \frac{1}{4\kappa_3^2}\left(s(\phi_1-\bar\phi_1)+(s-\bar s)\bar\phi_1
+s(\bar\phi_4-\phi_4)+(-s+\bar s)\bar\phi_4\right)^2
\end{split}
\]
and thus
\begin{equation}
\label{majV}
\begin{split}
&\parc{\mathbbm{V}-\bar{\mathbbm{V}}}^2
= \frac{1}{4\kappa_3^2}((s(\phi_1-\bar\phi_1)+(s-\bar s)\bar\phi_1)^2
+2(s(\phi_1-\bar\phi_1)+(s-\bar s)\bar\phi_1)\\
& \quad \times(s(\bar\phi_4-\phi_4)+(\bar s-s)\bar\phi_4)
+(s(\bar\phi_4-\phi_4)+(\bar s-s)\bar\phi_4)^2)\\
& = \Bigl[s^2(\phi_1-\bar\phi_1)^2
+2s\bar\phi_1(s-\bar s)(\phi_1-\bar\phi_1)+\bar\phi_1^2(s-\bar s)^2
+2s^2(\phi_1-\bar\phi_1)(\bar\phi_4-\phi_4)\\
&\quad +2s\bar\phi_1(s-\bar s)(\bar\phi_4-\phi_4)
+2\bar\phi_4s(s-\bar s)(\bar\phi_1-\phi_1)
-2\bar\phi_1\bar\phi_4(s-\bar s)^2\\
&\quad +s^2(\bar\phi_4-\phi_4)^2+2\bar\phi_4
s(\bar\phi_4-\phi_4)(\bar s-s)+\bar\phi_4^2(\bar s- s)^2\Bigr] 
\frac{1}{4\kappa_3^2}\\
& \le \frac{1}{4\kappa_3^2}((4s\bar\phi_1+\bar\phi_1^2
+4s\bar\phi_4+\bar\phi_4^2)(s-\bar s)^2
+(3s^2+2s\bar\phi_1+2s\bar\phi_4)(\phi_1-\bar\phi_1)^2\\
&\quad +(3s^2+2s\bar\phi_1+2s\bar\phi_4)(\bar\phi_4-\phi_4)^2)\\
& \le C_9 \e^{2\alpha T} [(s-\bar s)^2
+(\phi_1-\bar{\phi}_1)^2+(\phi_4-\bar{\phi}_4)^2],\\
\end{split}
\end{equation}
where
$$
C_9 = \frac{1}{4\kappa_2^2}\left(4\max\{S\}\max\{\bar p_1+\bar p_4\}
+\max\{\bar p_1\}^2+\max\{\bar p_4\}^2+3\max\{S\}^2\right).
$$
Analogously, we obtain
\begin{equation}
\label{majT}
\parc{\mathbbm{T}-\bar{\mathbbm{T}}}^2 \le C_{10} \e^{2\alpha T}[(i-\bar i)^2
+(\phi_3-\bar{\phi}_3)^2+(\phi_4-\bar{\phi}_4)^2],
\end{equation}
where
$$
C_{10} = \frac{1}{4\kappa_2^2}(4\max\{I\}\max\{\bar p_3+\bar p_4\}
+ \max\{\bar p_3\}^2+\max\{\bar p_4\}^2+3\max\{I\}^2).
$$
Finally, we have all the bounds needed to prove our result.
Adding equations~\eqref{eq:s-bar-s}, \eqref{eq:e-bar-e},
\eqref{eq:i-bar-i}, \eqref{eq:r-bar-r}, \eqref{eq:phi1-bar-phi1},
\eqref{eq:phi2-bar-phi2}, \eqref{eq:phi3-bar-phi3}
and \eqref{eq:phi4-bar-phi4}, we obtain, for the sum of the left-hand sides,
$$
\frac{1}{2}\Psi(T)+\frac{1}{2}\Phi(0)+\alpha\int_0^T
\left(\Psi(T)+\Phi(T)\right) dt
$$
and thus
\begin{multline*}
\frac{1}{2}[\Psi(T)+\Phi(0)]+\alpha\int_0^T \left(\Psi(T)+\Phi(T)\right)dt\\
\le \widetilde{C}\int_0^T
\left(\Psi(T)+\Phi(T)\right)dt
+\widehat{C}e^{3\alpha T}\int_0^T\left(\Psi(T)+\Phi(T)\right)dt,
\end{multline*}
which is equivalent to
\begin{equation}
\label{ineq}
\begin{split}
&\frac{1}{2}[\Psi(T)+\Phi(0)]+
\left(\alpha-\widetilde{C}-\widehat{C}e^{3 \alpha T}\right)
\int_0^T\left(\Psi(T)+\Phi(T)\right)dt \le 0.
\end{split}
\end{equation}
We now choose $\alpha$ so that
$\alpha>\widetilde{C}+\widehat{C}$
and note that $\frac{\alpha-\widetilde{C}}{\widehat{C}}>1$.
Subsequently, we choose $T$ such that
$$
T<\frac{1}{3\alpha}\ln\left(\frac{\alpha-\widetilde{C}}{\widehat{C}}\right).
$$
Then,
\begin{equation}
\label{eq:final-estimate}
3 \alpha T<\ln\left(\frac{\alpha-\widetilde{C}}{\widehat{C}}\right)
\quad \Rightarrow \quad e^{3 \alpha T}<\frac{\alpha-\widetilde{C}}{\widehat{C}}.
\end{equation}
It follows that $\alpha-\widetilde{C}-\widehat{C}e^{3\alpha T}>0$,
so that inequality \eqref{ineq} can hold if and only if
we have $s(t)=\bar s(t)$, $e(t)=\bar e(t)$, $i(t)=\bar i(t)$,
$r(t)=\bar r(t)$, $\phi_1(t)=\bar \phi_1(t)$,
$\phi_2(t)=\bar \phi_2(t)$, $\phi_3(t)=\bar \phi_3(t)$
and $\phi_4(t)=\bar \phi_4(t)$ for all $t\in[0,T]$.
This is equivalent to $S(t)=\bar S(t)$, $E(t)=\bar E(t)$,
$I(t)=\bar I(t)$, $R(t)=\bar R(t)$, $p_1(t)=\bar p_1(t)$,
$p_2(t)=\bar p_2(t)$, $p_3(t)=\bar p_3(t)$ and $p_4(t)=\bar p_4(t)$.
With this, the uniqueness of the optimal control is established on the interval $[0,T]$.
If $T\ge t_f$, then we have uniqueness on the whole interval. Otherwise, we can obtain
uniqueness on $[T,2T]$ for the optimal control problem whose initial conditions
on time $T$ coincide with the values of $S$, $E$, $I$ and $R$ on the end-time of the
interval $[0,T]$ (note that, by the forward invariance of the set $\Gamma$,
and since the constants $\widetilde{C}$ and $\widehat{C}$ in \eqref{eq:final-estimate}
depend only on the values of the several state and co-state variables on $\Gamma$,
we still have the same $T$). Proceeding in the same way, we conclude, after a finite
number of steps, that we have uniqueness on the interval $[0,t_f]$. The proof is complete.
\end{proof}


\section{Numerical simulations}
\label{section:Simulation-control}

In what follows, the incidence into the exposed class of susceptible individuals
and the birth function $\Lambda\parc{t}$ are chosen, for illustrative purposes, to be
\begin{equation}
\label{eq:beta}
\beta(t)\varphi(S,N,I)=0.56(1 - \mathrm{per}\cdot \cos( 2 \pi  t + 0.26 ) )SI
\end{equation}
and
\begin{equation}
\label{eq:Lambda}
\Lambda\parc{t} = 0.05 + 0.05\, \mathrm{per} \cdot\cos(2\pi t)
\end{equation}
with $\mathrm{per}\in[0,1[$. The remaining parameter functions,
$\mu\parc{t}$, $\eta\parc{t}$, $\eps\parc{t}$ and $\gamma\parc{t}$,
are assumed constant. Their values, as well as several other
used in this section, were taken from \cite{Weber200195} and
\cite{Zhang-Liu-Teng-AM-2012} and are presented in Table~\ref{table1}.
\begin{table}[ht!]
\centering
\caption{\small{Values of the parameters for problem \eqref{seir003}
used in Section~\ref{section:Simulation-control}.}}
\begin{tabular}{lll}
\toprule
Name & Description & Value\\
\midrule
$S_0$ & Initial susceptible population & 0.98\\
$E_0$ & Initial exposed population & 0\\
$I_0$ & Initial infective population & 0.01\\
$R_0$ & Initial recovered population & 0.01\\
$\mu$ & natural deaths & 0.05\\
$\eps$ & infectivity rate & 0.03\\
$\gamma$ & rate of recovery & 0.05\\
$\eta$ & loss of immunity rate & 0.041\\
$k_1$ & weight for the number of infected & 1\\
$k_2$ & weight for treatment & 0.01\\
$k_3$ & weight for vaccination & 0.01\\
$\tau_{\max}$ & maximum rate of treatment & 0.1\\
$v_{\max}$ & maximum rate of vaccination & 0.4\\
\bottomrule
\end{tabular}
\label{table1}
\end{table}
Note that we are in the autonomous case when
$\mathrm{per} = 0$ and in a periodic situation
for $0 < \mathrm{per} < 1$.

The solutions to the optimal control problems \eqref{seir003} here considered
exist (Theorem~\ref{mr:thm:exist}), are unique (Theorem~\ref{thm:uniq})
and can be found using the optimality conditions given
in Theorem~\ref{thm:Pontryagin-SEIR-Mayer}. Precisely,
our method to solve the problem consists to use
the state equations (the four ordinary differential equations
of problem \eqref{seir003}), the initial conditions, the adjoint equations
\eqref{eq:Pontryagin-SEIR-Mayer-1} and the transversality conditions
\eqref{eq:Pontryagin-SEIR-Mayer-5} with the controls given by
\eqref{eq:Pontryagin-SEIR-Mayer-6} and ~\eqref{eq:Pontryagin-SEIR-Mayer-7},
which are substituted into the state and adjoint equations.
The state equations are solved with the initial conditions
of Table~\ref{table1}, while the adjoint system is solved
backward in time, with the change of variable
\begin{align}
\label{change_var}
t'=t_f-t.
\end{align}
Then, the two systems of equations constitute an initial value problem,
which is solved numerically with an explicit 4th and 5th order Runge--Kutta
method through the \texttt{ode45} solver of \textsf{MATLAB}.
The procedure is briefly described in Algorithm~\ref{alg}.
\begin{algorithm}
\label{alg}
1. Let $i=0$, $\mathbbm{T}_i=0$ and $\mathbbm{V}_i=0$.
	
2. Let $i=i+1$. The variables $S_i$, $E_i$, $I_i$
and $R_i$ are determined using the initial conditions and
the vectors $\mathbbm{T}_{i-1}$ and $\mathbbm{V}_{i-1}$.
	
3. Apply the change of variable \eqref{change_var}
to the adjoint system, to the state and control variables.
Compute the adjoint variables $p_{1,i}$, $p_{2,i}$, $p_{3,i}$ and $p_{4,i}$
solving the resulting adjoint system.
	
4. Compute the control variables $\mathbbm{T}_{i}$ and $\mathbbm{V}_{i}$
with formulas~\eqref{eq:Pontryagin-SEIR-Mayer-6}
and \eqref{eq:Pontryagin-SEIR-Mayer-7}.
	
5. If the relative error is smaller than a given tolerance
for all the variables ($<1\%$ in our case), then stop.
Otherwise, go to step 2.

\caption{Numerical algorithm to solve problem \eqref{seir003}.}
\end{algorithm}

In each plot of Figures~\ref{fig_SR_EI0} and \ref{fig_SR_EI08},
we present two sets of trajectories (distinguished by using dashed
and continuous lines), in order to easily compare the uncontrolled
and optimally controlled situations, the former mentioned by the
suffix ``\emph{-u}'' in the figures' legend.
\begin{figure}[ht]
\begin{minipage}[b]{.495\linewidth}
\includegraphics[width=\linewidth]{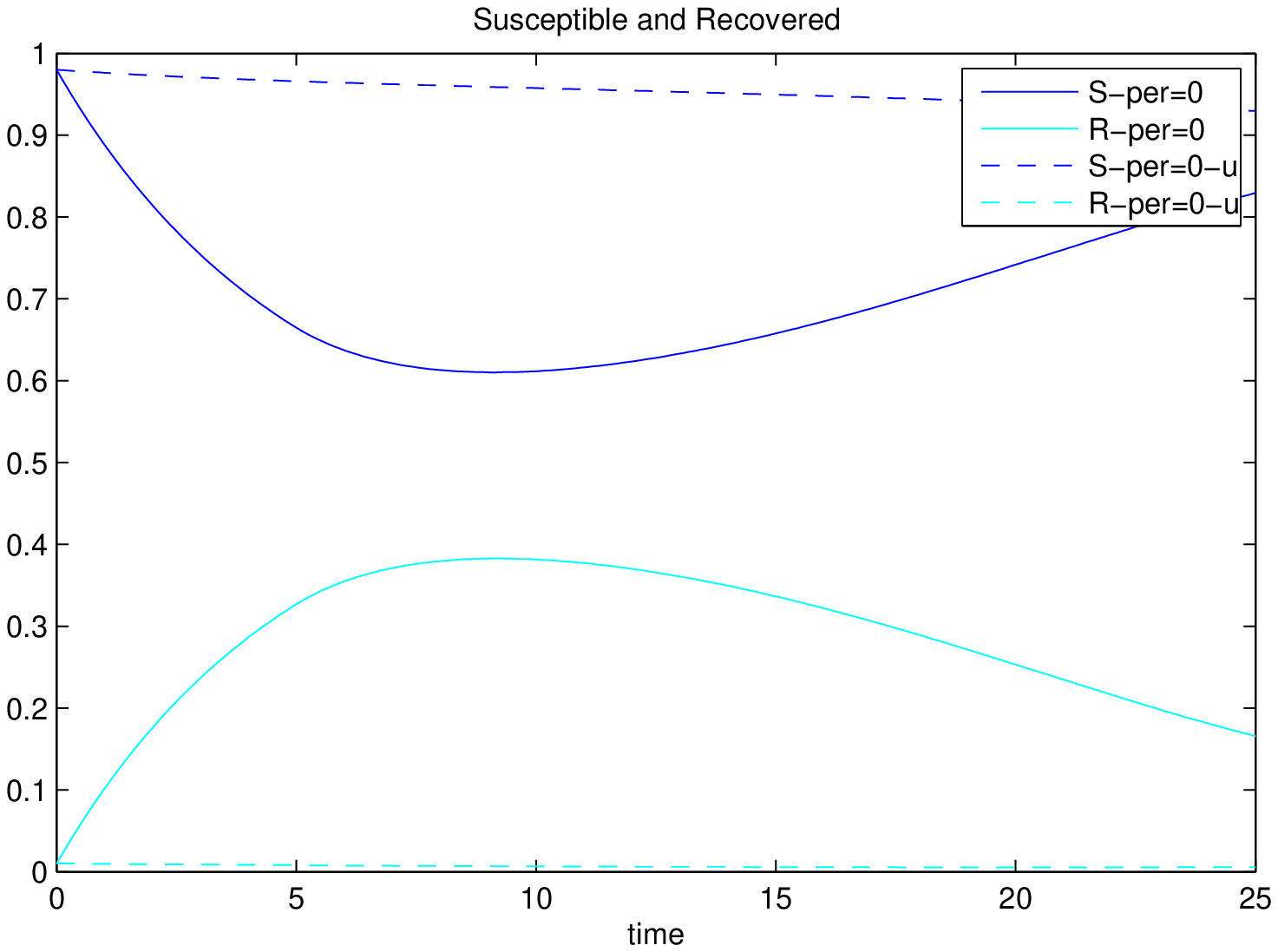}
\end{minipage} \hfill
\begin{minipage}[b]{.495\linewidth}
\includegraphics[width=\linewidth]{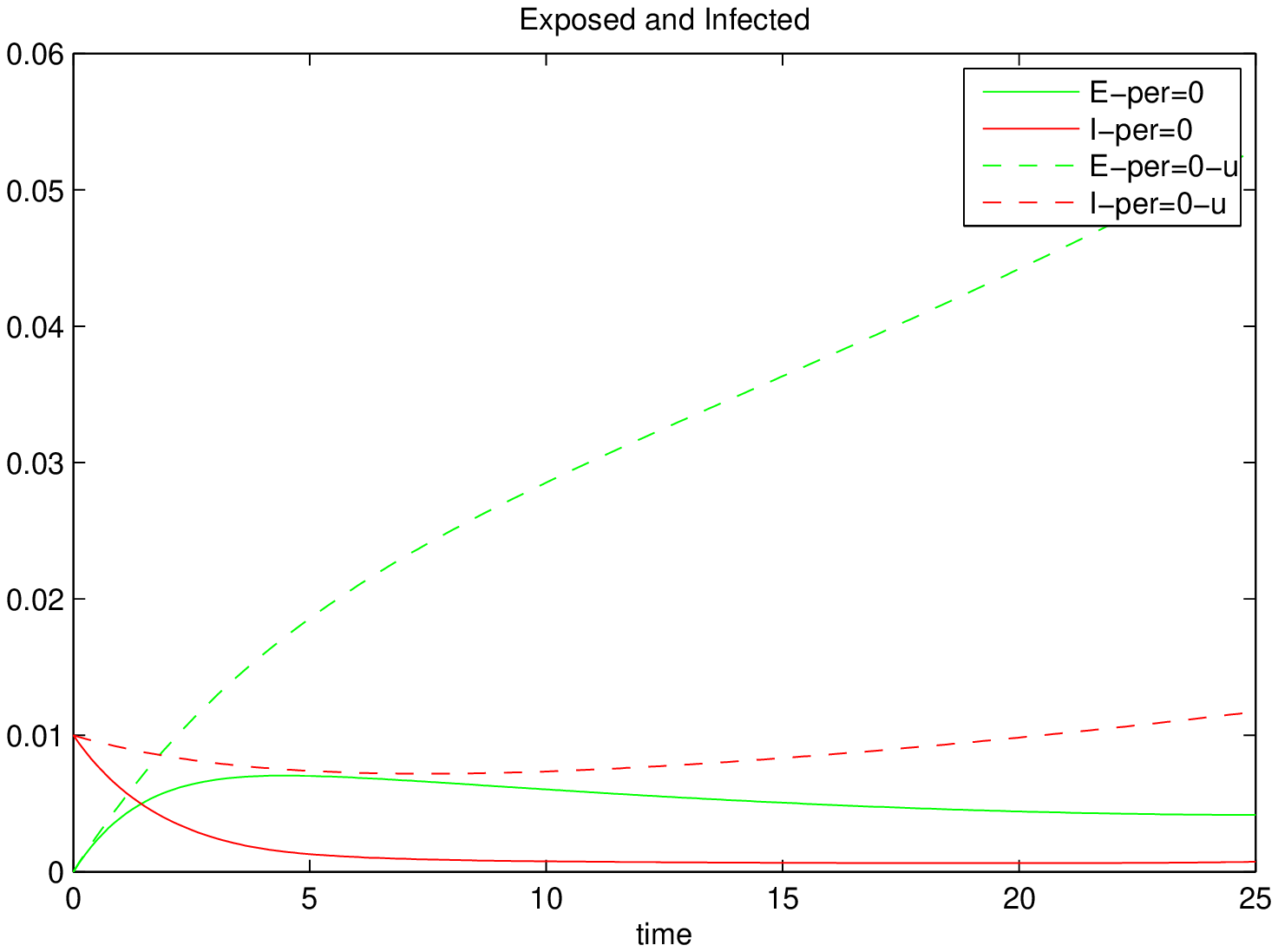}
\end{minipage}
\caption{\small{SEIRS autonomous model ($\mathrm{per}=0$ in
\eqref{eq:beta} and \eqref{eq:Lambda}):
uncontrolled (dashed lines) versus
optimally controlled (continuous lines).}}
\label{fig_SR_EI0}
\end{figure}
\begin{figure}[ht]
\begin{minipage}[b]{.495\linewidth}
\includegraphics[width=\linewidth]{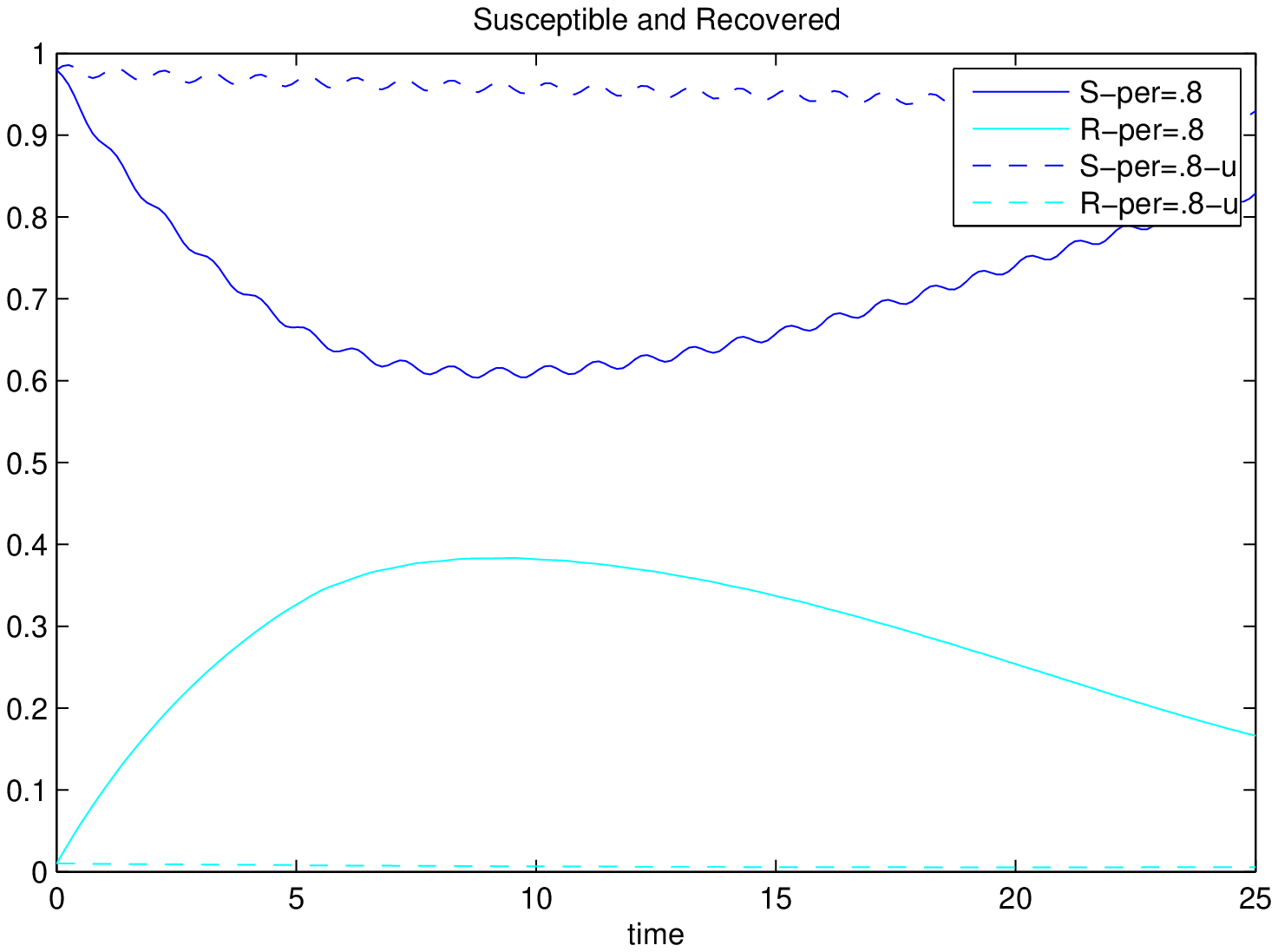}
\end{minipage} \hfill
\begin{minipage}[b]{.495\linewidth}
\includegraphics[width=\linewidth]{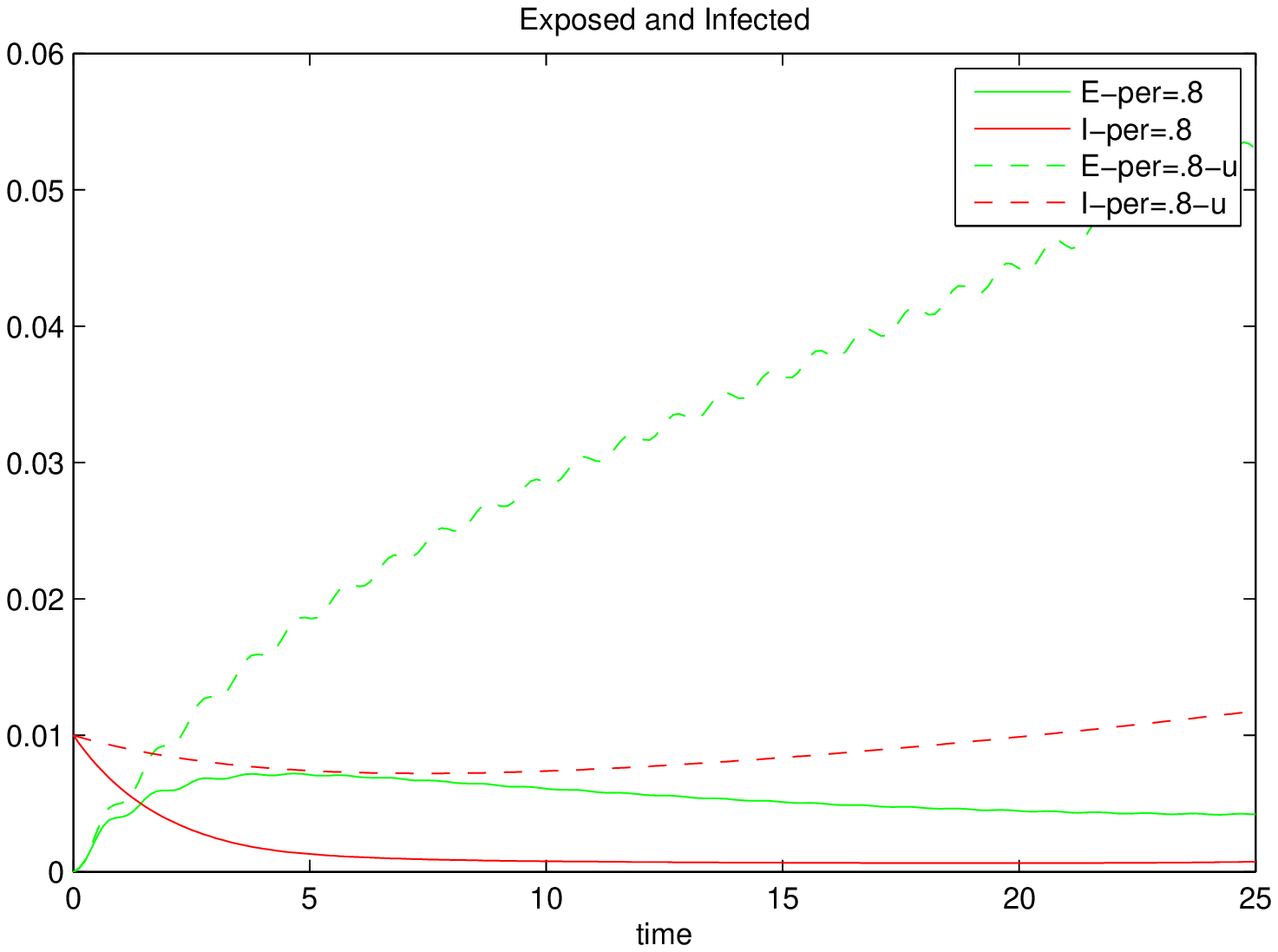}
\end{minipage}
\caption{\small{SEIRS periodic model ($\mathrm{per}=0.8$
in \eqref{eq:beta} and \eqref{eq:Lambda}):
uncontrolled (dashed lines) versus
optimally controlled (continuous lines).}}
\label{fig_SR_EI08}
\end{figure}
The behavior of our SEIRS model with both $\mathrm{per}=0$ (autonomous case,
Figure~\ref{fig_SR_EI0}) and $\mathrm{per}=0.8$ (periodic case, Figure~\ref{fig_SR_EI08}),
show the effectiveness of optimal control theory.
Indeed, in Figures~\ref{fig_SR_EI0} and \ref{fig_SR_EI08} we observe that,
if we apply treatment and vaccination as given by Theorem~\ref{thm:Pontryagin-SEIR-Mayer}
(optimally controlled case), then the number of exposed and infected individuals
is significantly lower, as well as the number of susceptible individuals,
while the number of recovered is significantly higher. It can be also seen
that the susceptible and recovered classes have a very different behavior
in the controlled and uncontrolled situations.

In Figures~\ref{fig_SR_EI_c} and \ref{fig_SR_EI_u}, we have the same
trajectories as in Figures~\ref{fig_SR_EI0} and \ref{fig_SR_EI08}.
They illustrate the effect of the periodicity of
$\Lambda(t)$ \eqref{eq:Lambda} and $\beta(t)$ \eqref{eq:beta}
in the classes $S$, $E$, $I$ and $R$ of individuals.
The effect is perceptible in susceptible and exposed classes,
since the periodic functions are present in these classes. From our results,
we claim that the periodicity effect is ``softened''
in the transition between classes.
\begin{figure}[ht]
\begin{minipage}[b]{.495\linewidth}
\includegraphics[width=\linewidth]{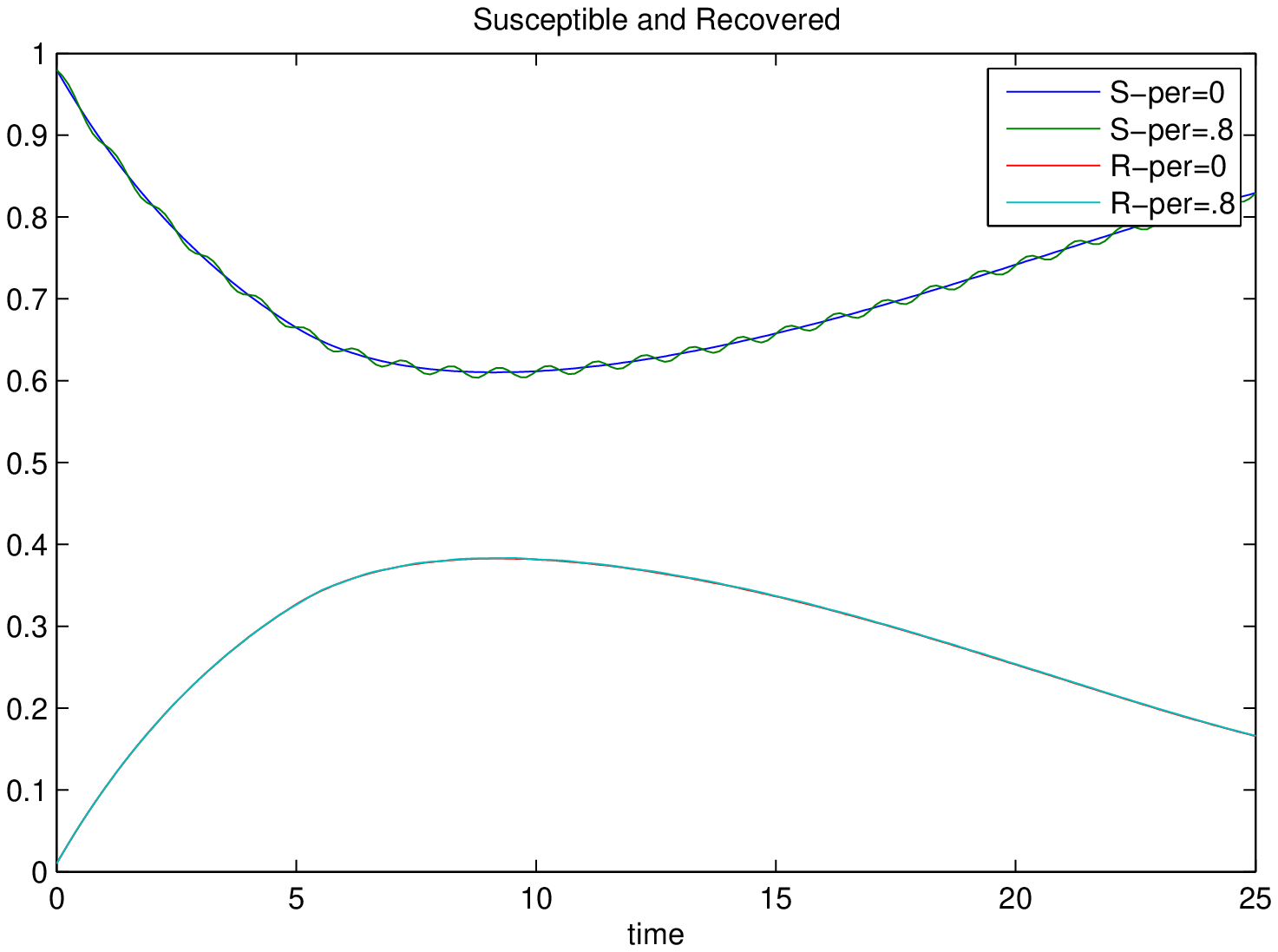}
\end{minipage} \hfill
\begin{minipage}[b]{.495\linewidth}
\includegraphics[width=\linewidth]{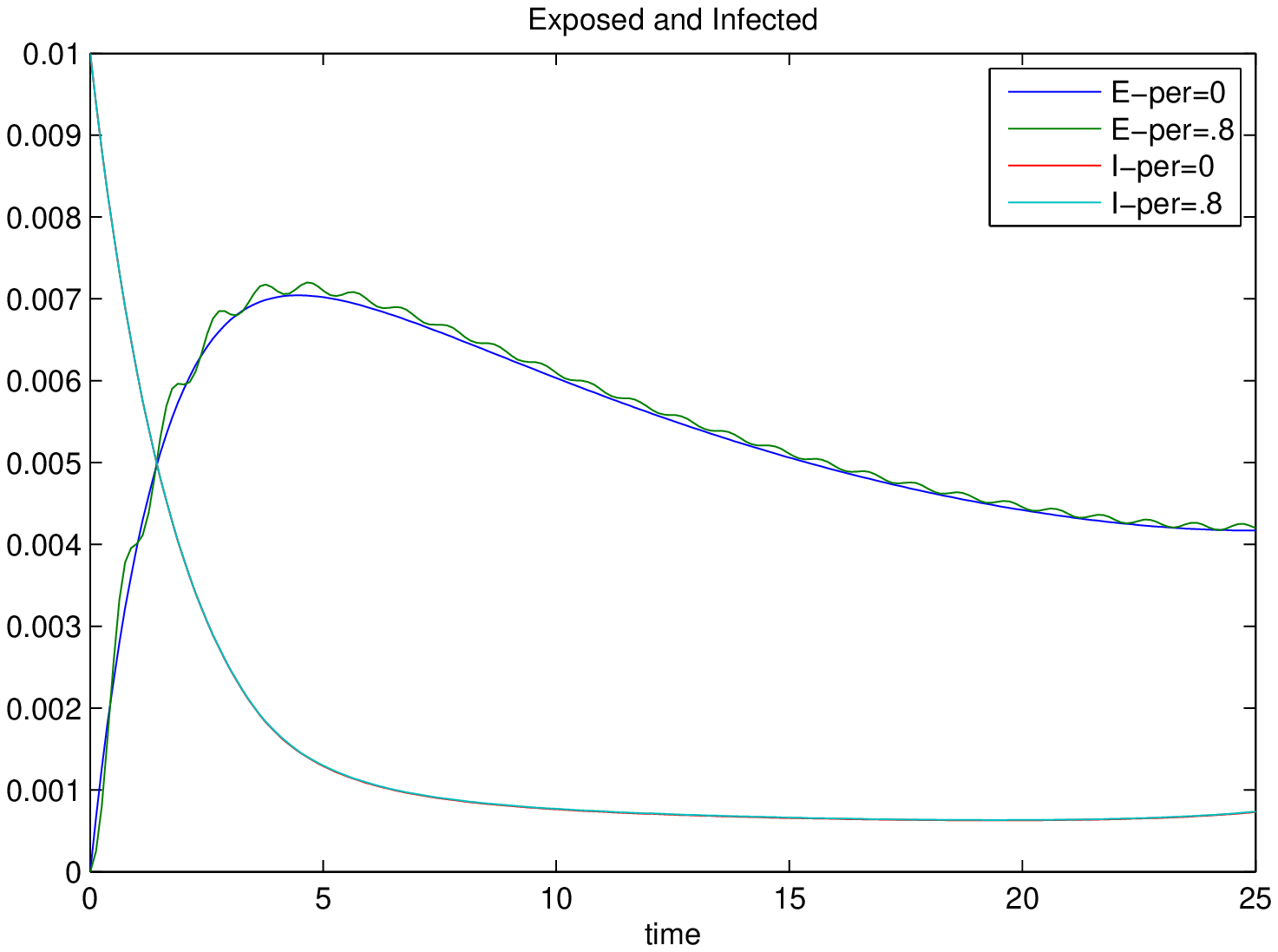}
\end{minipage}
\caption{\small{SEIRS model subject to optimal control:
autonomous ($\mathrm{per}=0$) versus periodic ($\mathrm{per}=0.8$) cases.}}
\label{fig_SR_EI_c}
\end{figure}
\begin{figure}[ht]
\begin{minipage}[b]{.495\linewidth}
\includegraphics[width=\linewidth]{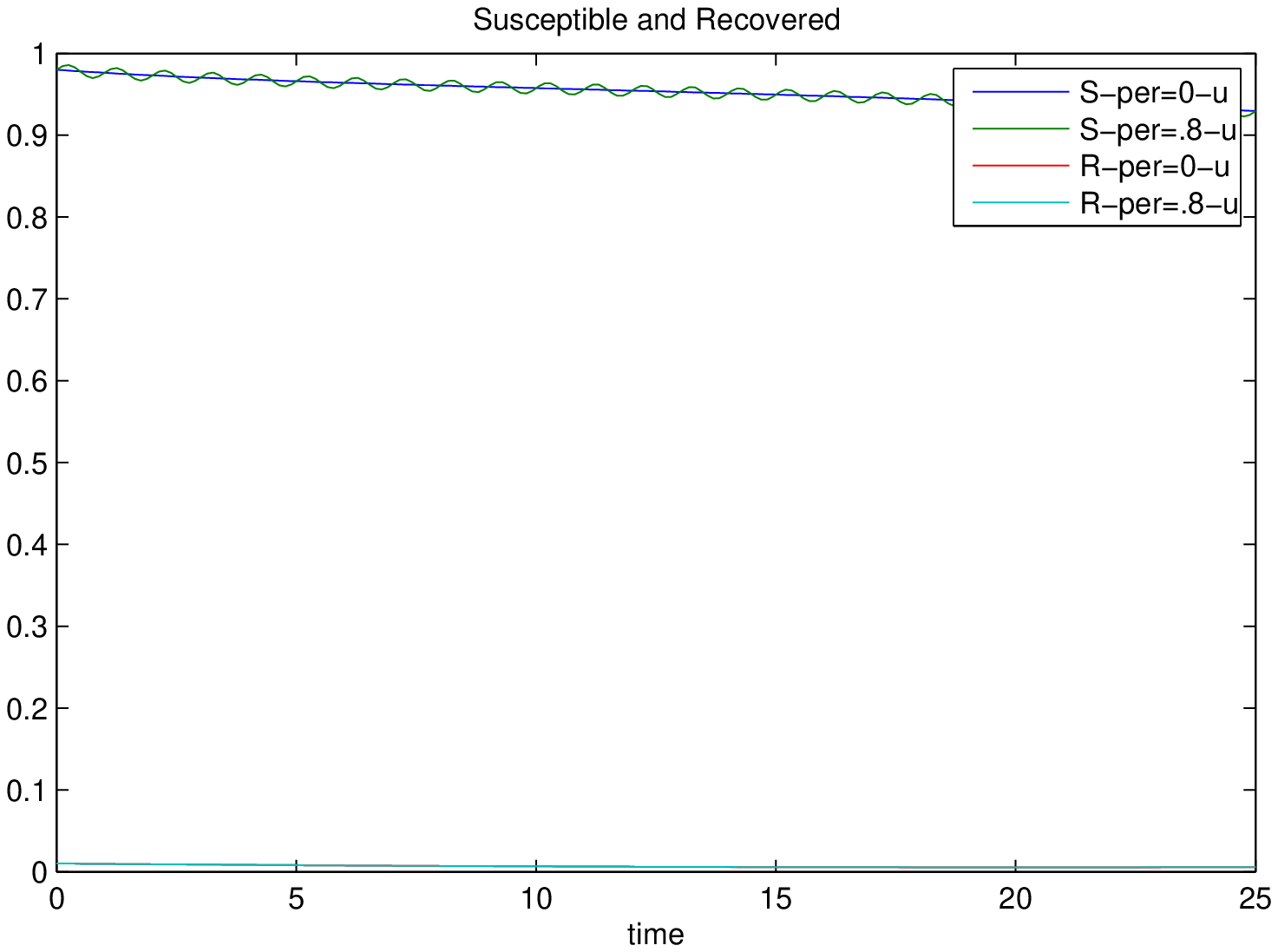}
\end{minipage} \hfill
\begin{minipage}[b]{.495\linewidth}
\includegraphics[width=\linewidth]{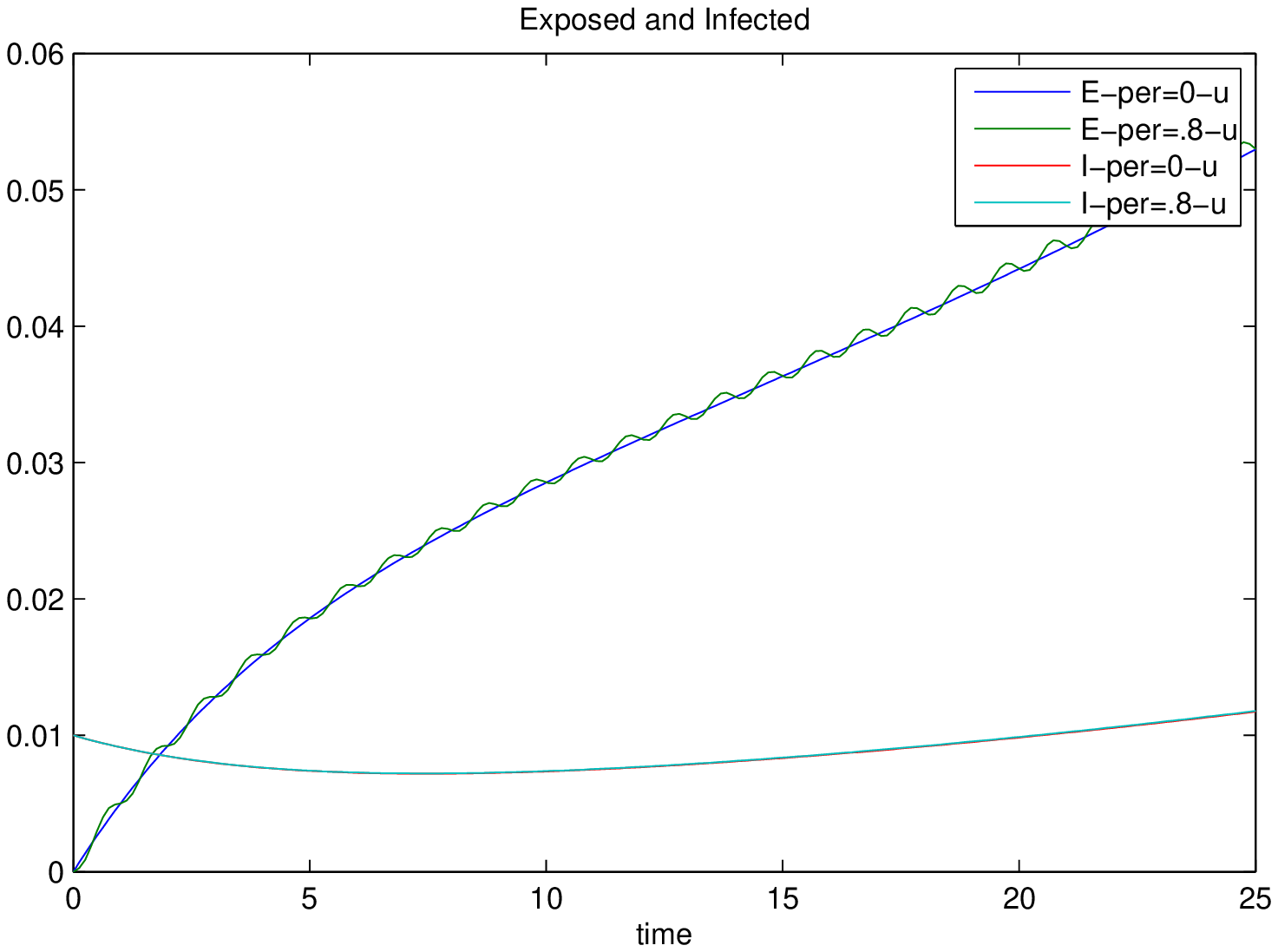}
\end{minipage}
\caption{\small{SEIRS model without control measures:
autonomous ($\mathrm{per}=0$) versus periodic ($\mathrm{per}=0.8$) cases.}}
\label{fig_SR_EI_u}
\end{figure}

In Figure~\ref{fig_t_v}, we show the optimal controls:
treatment $\mathbbm{T}^*(t)$ of infective individuals (left side)
and vaccination $\mathbbm{V}^*(t)$ of susceptible  (right side).
According to the minimality conditions \eqref{eq:Pontryagin-SEIR-Mayer-6}
and \eqref{eq:Pontryagin-SEIR-Mayer-7}, both controls go to zero when $t\to t_{f}=25$.
The periodicity effect is perceptible in $\mathbbm{V}^*(t)$,
consequence of the fact that vaccination takes place in the susceptible class.
Treatment occurs in the infective class and, as we have seen, in this class
the periodicity is not so perceptible. As a consequence, periodicity is only
slightly perceptible in the treatment control variable $\mathbbm{T}^*(t)$.
\begin{figure}[ht]
\begin{minipage}[b]{.495\linewidth}
\includegraphics[width=\linewidth]{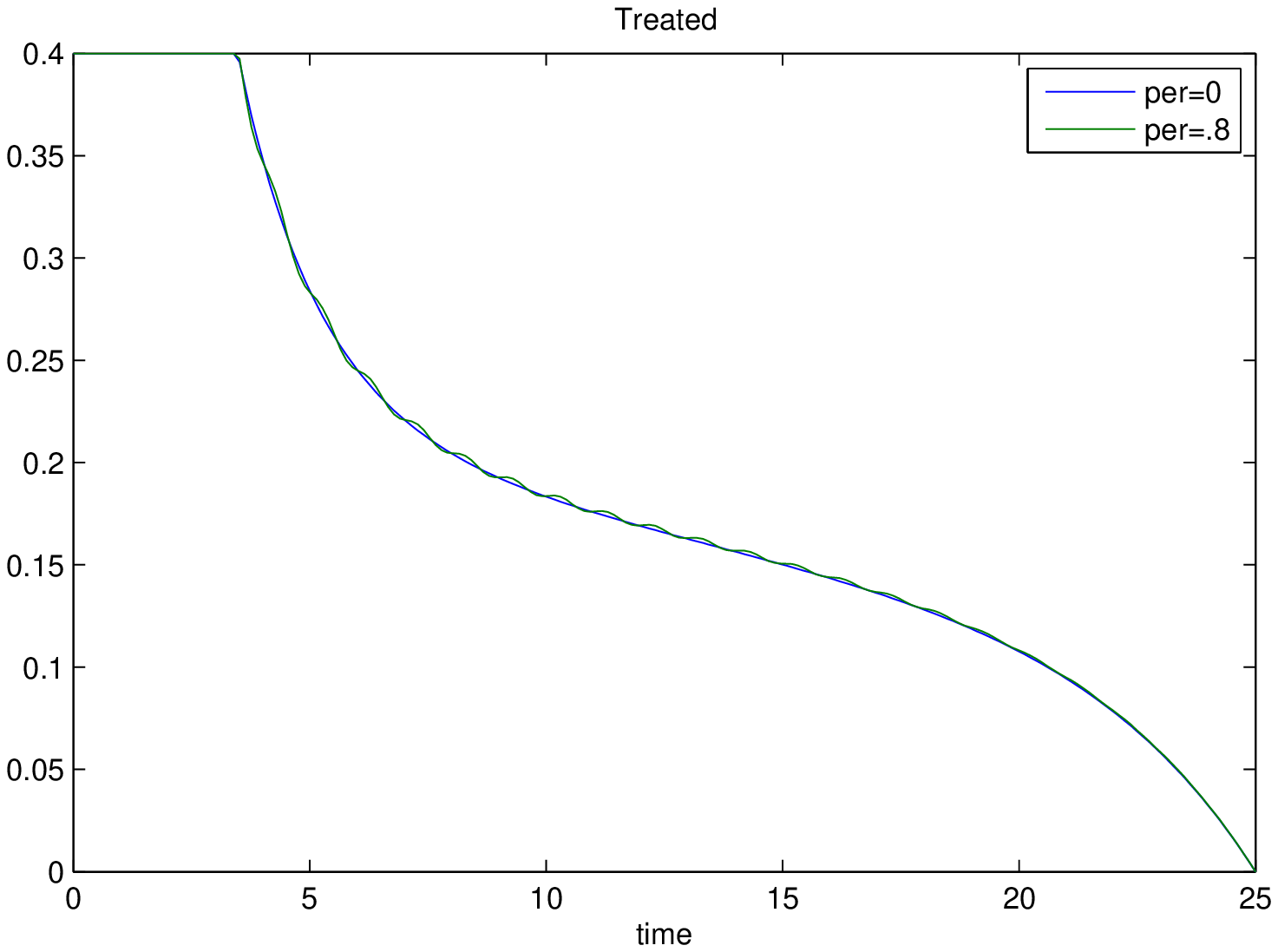}
\end{minipage} \hfill
\begin{minipage}[b]{.495\linewidth}
\includegraphics[width=\linewidth]{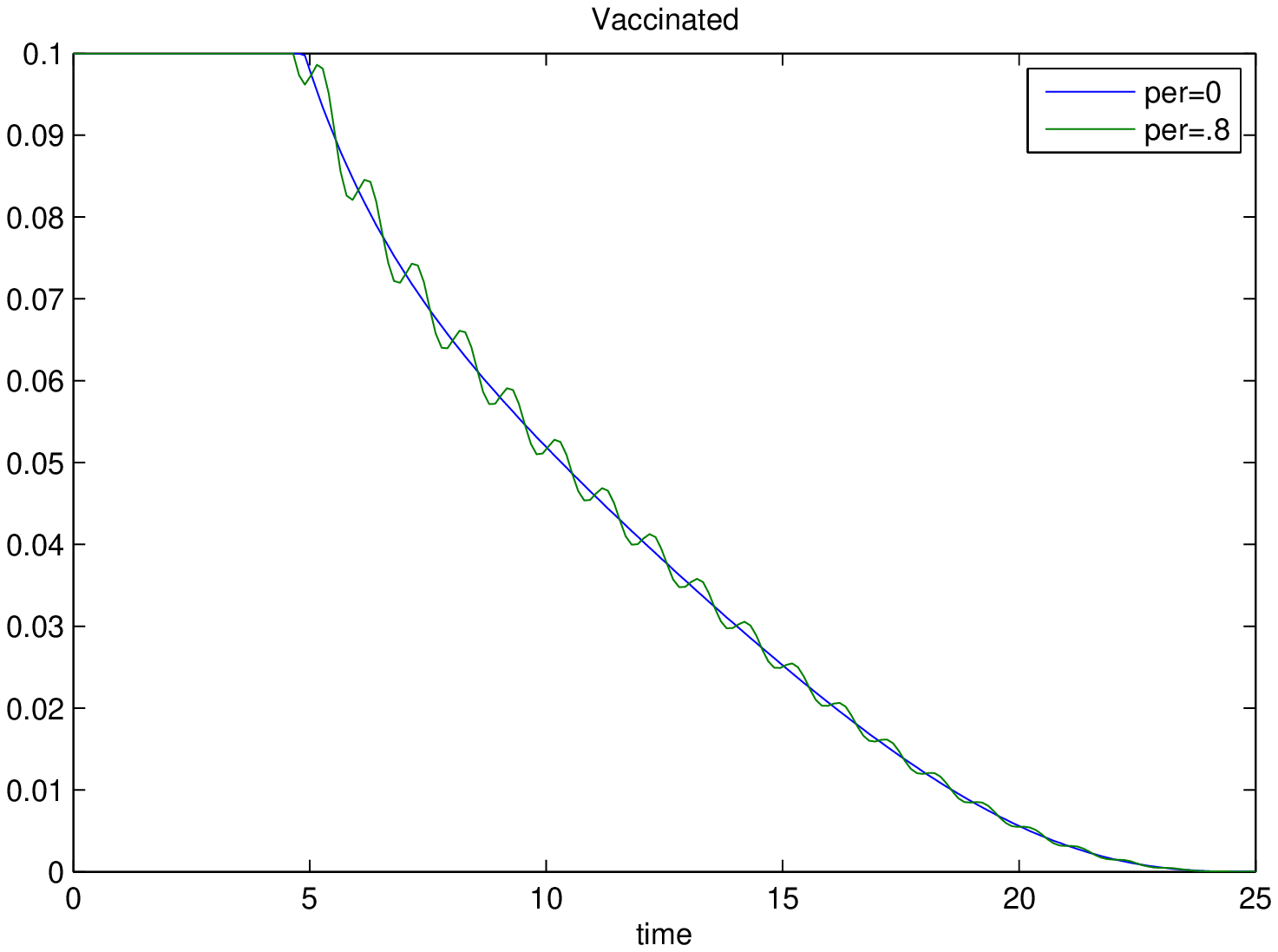}
\end{minipage}
\caption{\small{The optimal controls $\mathbbm{T}^*$
\eqref{eq:Pontryagin-SEIR-Mayer-6} (treatment)
and $\mathbbm{V}^*$ \eqref{eq:Pontryagin-SEIR-Mayer-7}
(vaccination): autonomous ($\mathrm{per}=0$)
versus periodic ($\mathrm{per}=0.8$) cases.}}
\label{fig_t_v}
\end{figure}

In Figures~\ref{fig_mu} to \ref{fig_eta}, we present the behavior of infected
individuals $I^*(t)$, treatment $\mathbbm{T}^*(t)$ and vaccination $\mathbbm{V}^*(t)$
optimal controls, when we vary the parameters $\mu$, $\gamma$, $\eps$ and $\eta$,
respectively, maintaining, in each case, the initial values and the remaining parameters
as in Table~\ref{table1}. In all Figures~\ref{fig_mu}--\ref{fig_eta},
we varied the respective parameter ($\mu$, $\gamma$, $\eps$ and $\eta$)
from $0$ to $0.1$ in steps of length $0.01$.

Referring to Figure~\ref{fig_mu}, where the variation of $\mu$ is analyzed,
we can see that the effect of periodicity is more perceptible in vaccination
than in treatment.  In the infected class, for low values of $\mu$ (low mortality),
we observe that the number of infected increases. This is justified
by the difference between birth and death.
\begin{figure}[ht]
\centering
\includegraphics[scale=0.43]{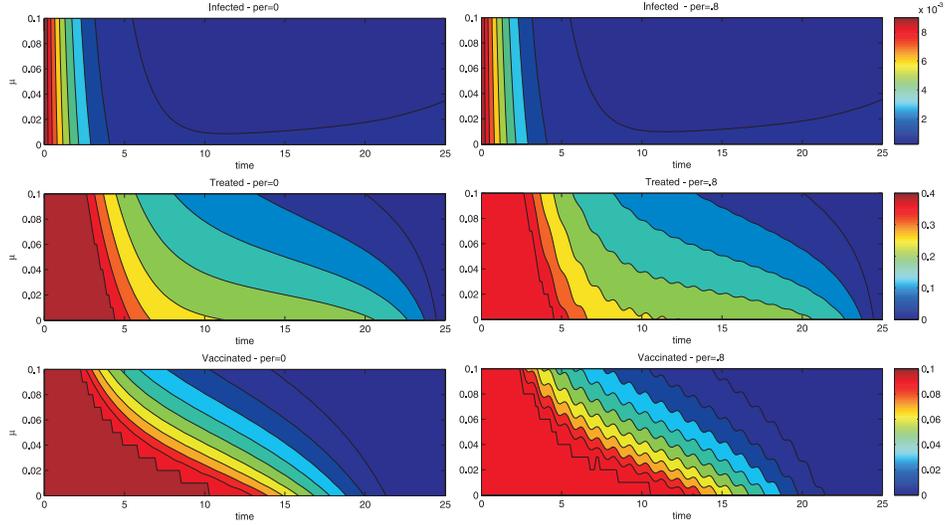}
\caption{\small{Variation of infected individuals $I^*(t)$
and optimal measures of treatment $\mathbbm{T}^*(t)$
and vaccination $\mathbbm{V}^*(t)$, in both autonomous ($\mathrm{per}=0$)
and periodic ($\mathrm{per}=0.8$) cases, with the natural death $\mu \in [0, 0.1]$.}}
\label{fig_mu}
\end{figure}

Concerning Figure~\ref{fig_gama}, where we vary
the rate of recovery $\gamma$,
the effect of periodicity is analogous
to the previous situation: the bigger the value of $\gamma$,
more infected individuals recover and thus the faster
the infected class decreases.
\begin{figure}[ht]
\centering
\includegraphics[scale=0.43]{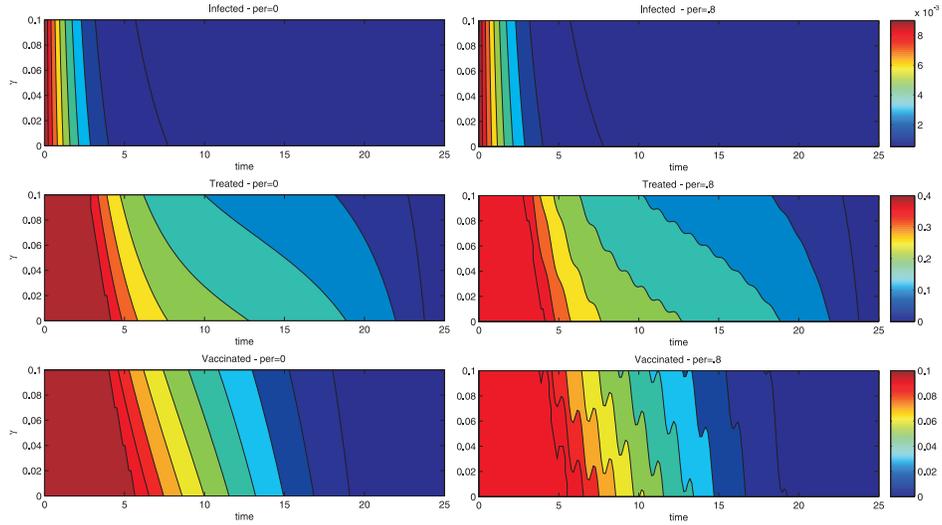}
\caption{\small{Variation of infected individuals $I^*(t)$
and optimal measures of treatment $\mathbbm{T}^*(t)$
and vaccination $\mathbbm{V}^*(t)$, in both autonomous ($\mathrm{per}=0$)
and periodic ($\mathrm{per}=0.8$) cases,  with the
rate of recovery $\gamma \in [0, 0.1]$.}}
\label{fig_gama}
\end{figure}

In Figure~\ref{fig_epsilon}, one can see the effect of the variation
of the infectivity rate $\eps$. The effect of periodicity is also
in this case analogous to the previous situations. Indeed, when
we have a higher value of $\eps$, we have a faster transition
of exposed individuals into the infected class and this is the reason
why we observe in Figure~\ref{fig_epsilon} that an increase
on the value of $\eps$ leads to an increase in the infected class.
\begin{figure}[ht]
\centering
\includegraphics[scale=0.55]{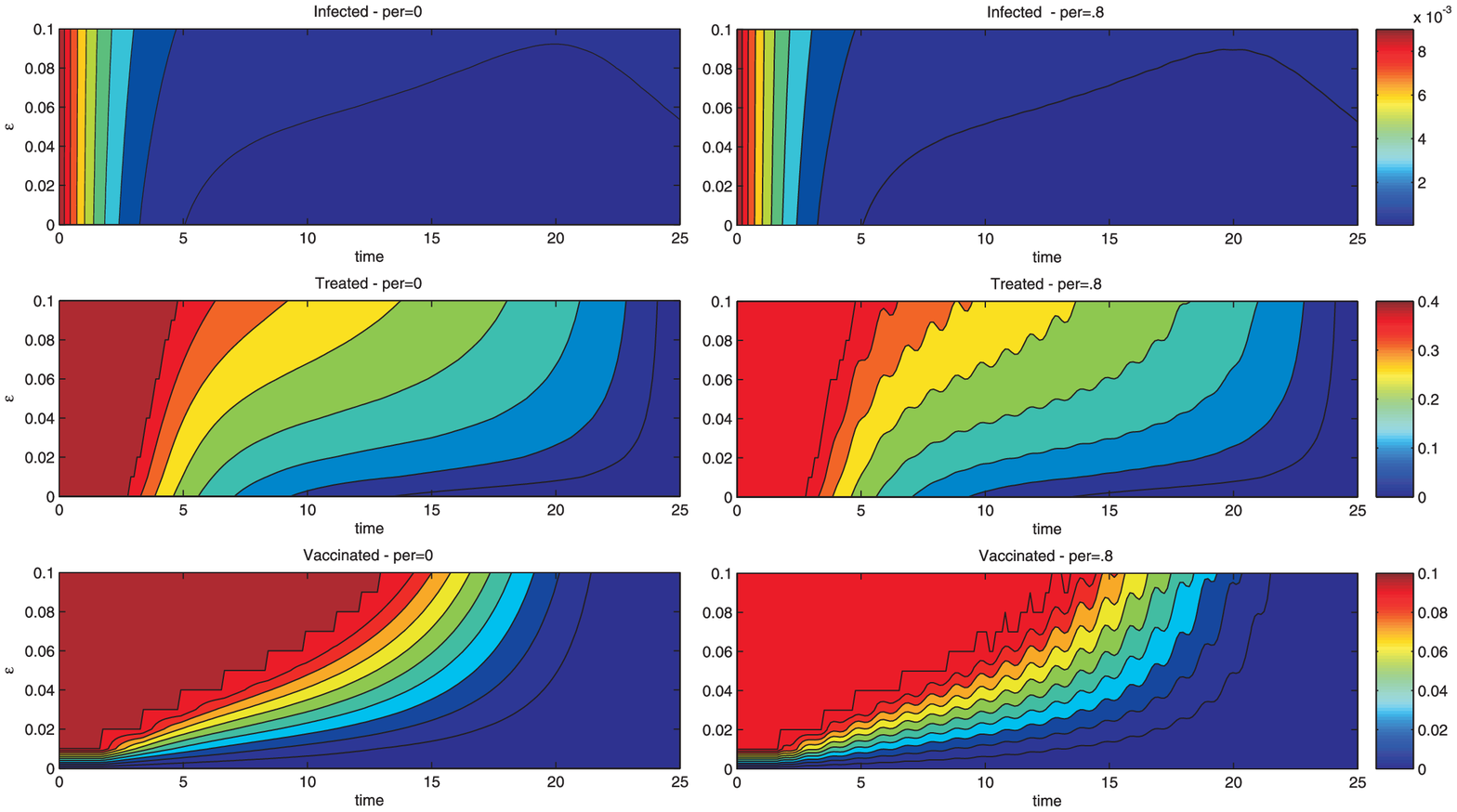}
\caption{\small{Variation of infected individuals $I^*(t)$
and optimal measures of treatment $\mathbbm{T}^*(t)$ and vaccination $\mathbbm{V}^*(t)$,
in both autonomous ($\mathrm{per}=0$) and periodic ($\mathrm{per}=0.8$) cases,
with the infectivity rate $\eps \in [0, 0.1]$.}}
\label{fig_epsilon}
\end{figure}

Finally, in Figure~\ref{fig_eta}, the variation of
the loss of immunity rate $\eta$ is highlighted.
We conclude that the periodicity effect is similar
to the previous considered scenarios. However,
the variation of $\eta$ is the one that less influences
the behavior of the three variables $I^*(t)$,
$\mathbbm{T}^*(t)$ and $\mathbbm{V}^*(t)$.
\begin{figure}[ht]
\centering
\includegraphics[scale=0.442]{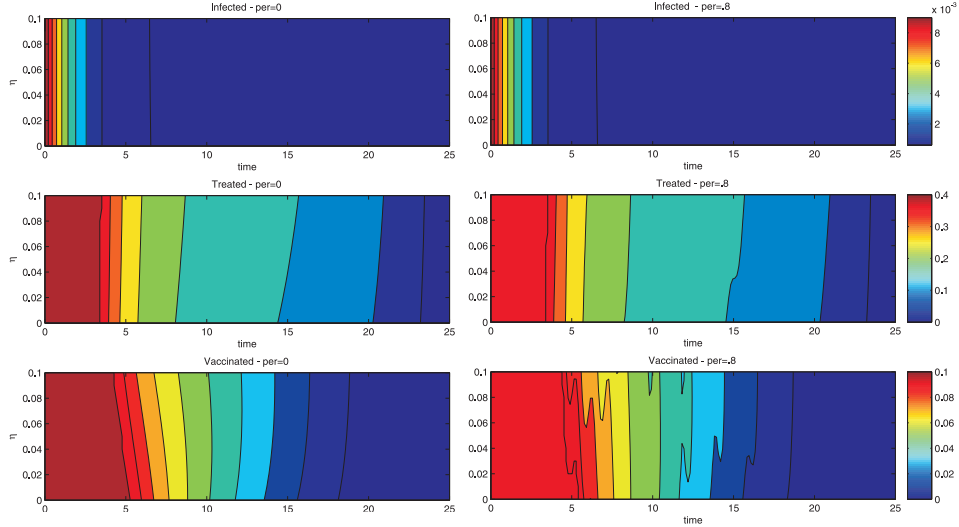}
\caption{\small{Variation of infected individuals $I^*(t)$
and optimal measures of treatment $\mathbbm{T}^*(t)$ and vaccination $\mathbbm{V}^*(t)$,
in both autonomous ($\mathrm{per}=0$) and periodic ($\mathrm{per}=0.8$) cases,
with the loss of immunity rate $\eta \in [0, 0.1]$.}}
\label{fig_eta}
\end{figure}

It is worth mentioning that, in the simulations done
and in the range of parameters considered,
the variation of the periodic parameter $\mathrm{per}$
has a very small effect on the obtained cost functional $\matj$.
Namely, we saw numerically that
$$
\left|\matj\parc{I,\mathbbm{T},\mathbbm{V}}\big|_{\mathrm{per}=v_1}
-\matj\parc{I,\mathbbm{T},\mathbbm{V}}\big|_{\mathrm{per}=v_2}\right|
\leq \mbox{0.000329537},
$$
for  $v_1,v_2\in \{0,0.8\}$.


\section*{Acknowledgments}

The authors are grateful to an anonymous Referee for
valuable comments and suggestions.



\medskip

Received April 2017; revised June 2017.

\medskip



\begin{thebibliography}{99}

\bibitem{Anderson-May-OUP-1991}
\newblock R.~M. Anderson and R.~M. May,
\newblock \emph{Infectious diseases of humans: dynamics and control},
\newblock Oxford University Press, 1991.

\bibitem{MyID:364} [10.3934/jimo.2017054]
\newblock I. Area, F. Nda\"{\i}rou, J. J. Nieto, C. J. Silva and D. F. M. Torres,
\newblock \doititle{Ebola model and optimal control with vaccination constraints},
\newblock \emph{J. Ind. Manag. Optim.}, \textbf{14} (2018), 427--446.
\newblock \arXiv{1703.01368}

\bibitem{abnormal} (MR0947787)
\newblock E. R. Avakov,
\newblock The maximum principle for abnormal optimal control problems,
\newblock \emph{Soviet Math. Dokl.,} \textbf{37} (1988), 231--234.

\bibitem{Bai-Zhou-NARWA-2012} (MR2863936) [10.1016/j.nonrwa.2011.02.008]
\newblock Z.~Bai and Y.~Zhou,
\newblock \doititle{Global dynamics of an {SEIRS} epidemic model with periodic vaccination and seasonal contact rate},
\newblock \emph{Nonlinear Anal. Real World Appl.}, \textbf{13} (2012), 1060--1068.

\bibitem{Cori-Valleron-E-2012} [10.1016/j.epidem.2012.06.001]
\newblock A.~Cori, A.~Valleron, F.~Carrat, G.~Scalia~Tomba, G.~Thomas and P.~Bo\"elle,
\newblock \doititle{Estimating influenza latency and infectious period durations using viral excretion data},
\newblock \emph{Epidemics}, \textbf{4} (2012), 132--138.

\bibitem{Ding:Tao:Zhu} [10.1109/ChiCC.2016.7553763]
\newblock C.~Ding, N.~Tao and Y.~Zhu,
\newblock \doititle{A mathematical model of Zika virus and its optimal control},
\newblock \emph{Proceedings of the 35th Chinese Control Conference,} July 27-29, 2016, Chengdu, China. \emph{IEEE Xplore}, (2016), 2642--2645.

\bibitem{Edlund-E-2011} [10.1016/j.epidem.2011.04.002]
\newblock S.~Edlund, J.~Kaufman, J.~Lessler, J.~Douglas, M.~Bromberg, Z.~Kaufman, R.~Bassal, G.~Chodick, R.~Marom, V.~Shalev, Y.~Mesika, R.~Ram and A.~Leventhal,
\newblock \doititle{Comparing three basic models for seasonal influenza},
\newblock \emph{Epidemics}, \textbf{3} (2011), 135--142.

\bibitem{fister1998optimizing} (MR1657195)
\newblock K. R. Fister, S. Lenhart and J. S. McNally,
\newblock Optimizing chemotherapy in an HIV model,
\newblock \emph{Electron. J. Differential Equations}, {\bf 1998} (1998), 12~pp.

\bibitem{Fleming-Rishel-Springer-Verlag-1975} (MR0454768)
\newblock W. H. Fleming and R. W. Rishel,
\newblock \emph{Deterministic and Stochastic Optimal Control},
\newblock Springer, Berlin, 1975.

\bibitem{Gaff-Schaefer-MBE-2009} (MR2549500) [10.3934/mbe.2009.6.469]
\newblock H.~Gaff and E.~Schaefer,
\newblock \doititle{Optimal control applied to vaccination and treatment strategies for various epidemiological models},
\newblock \emph{Math. Biosci. Eng.}, \textbf{6} (2009), 469--492.

\bibitem{Gao-Chen-Teng-NARWA-2008} (MR2382401) [10.1016/j.nonrwa.2006.12.004]
\newblock S.~Gao, L.~Chen and Z.~Teng,
\newblock \doititle{Pulse vaccination of an {SEIR} epidemic model with time delay},
\newblock \emph{Nonlinear Anal. Real World Appl.,} \textbf{9} (2008), 599--607.

\bibitem{Hethcote-Lewis-Driessche-JMB-1989} (MR0984225) [10.1007/BF00276080]
\newblock H.~W. Hethcote, M.~A. Lewis and P.~van~den Driessche,
\newblock \doititle{An epidemiological model with a delay and a nonlinear incidence rate},
\newblock \emph{J. Math. Biol.}, \textbf{27} (1989), 49--64.

\bibitem{Driessche-Hethcote-JMB-1991} (MR1089786) [10.1007/BF00160539]
\newblock H.~W. Hethcote and P.~van~den Driessche,
\newblock \doititle{Some epidemiological models with nonlinear incidence},
\newblock \emph{J. Math. Biol.}, \textbf{29} (1991), 271--287.

\bibitem{Korobeinikov-Maini-MBE-2004} (MR2130156) [10.3934/mbe.2004.1.57]
\newblock A.~Korobeinikov and P.~K. Maini,
\newblock \doititle{A {L}yapunov function and global properties for {SIR} and {SEIR} epidemiological models with nonlinear incidence},
\newblock \emph{Math. Biosci. Eng.}, \textbf{1} (2004), 57--60.

\bibitem{Kuniya-Nakata-AMC-2012} (MR2923030) [10.1016/j.amc.2012.03.011]
\newblock T.~Kuniya and Y.~Nakata,
\newblock \doititle{Permanence and extinction for a nonautonomous {SEIS} epidemic model},
\newblock \emph{Appl. Math. Comput.,} \textbf{218} (2012), 9321--9331.

\bibitem{MyID:356} (MR3602689) [10.1016/j.cam.2016.11.002]
\newblock A. P. Lemos-Pai\~ao, C. J. Silva and D. F. M. Torres,
\newblock \doititle{An epidemic model for cholera with optimal control treatment},
\newblock \emph{J. Comput. Appl. Math.}, \textbf{318} (2017), 168--180.
\newblock \arXiv{1611.02195}

\bibitem{Li-Smith-Wang-SJAM-2001} (MR1857535) [10.1137/S0036139999359860]
\newblock M.~Y. Li, H.~L. Smith and L.~Wang,
\newblock \doititle{Global dynamics an {SEIR} epidemic model with vertical transmission},
\newblock \emph{SIAM J. Appl. Math.}, \textbf{62} (2001), 58--69.

\bibitem{Liu-Hethcote-Levin-JMB-1987} (MR0908379) [10.1007/BF00277162]
\newblock W.~M. Liu, H.~W. Hethcote and S.~A. Levin,
\newblock \doititle{Dynamical behavior of epidemiological models with nonlinear incidence rates},
\newblock \emph{J. Math. Biol.}, \textbf{25} (1987), 359--380.

\bibitem{Martcheva-JBD-2009} (MR2574006) [10.1080/17513750802638712]
\newblock M.~Martcheva,
\newblock \doititle{A non-autonomous multi-strain {SIS} epidemic model},
\newblock \emph{J. Biol. Dyn.,} \textbf{3} (2009), 235--251.

\bibitem{Mateus-Silva-AMC-2014} (MR3270830) [10.1016/j.amc.2014.08.078]
\newblock J.~P. Mateus and C.~M. Silva,
\newblock \doititle{A non-autonomous {SEIRS} model with general incidence rate},
\newblock \emph{Appl. Math. Comput.}, \textbf{247} (2014), 169--189.

\bibitem{Nakata-Kuniya-JMAA-2010} (MR2559058) [10.1016/j.jmaa.2009.08.027]
\newblock Y.~Nakata and T.~Kuniya,
\newblock \doititle{Global dynamics of a class of {SEIRS} epidemic models in a periodic environment},
\newblock \emph{J. Math. Anal. Appl.}, \textbf{363} (2010), 230--237.

\bibitem{book:Pont} (MR0166037)
\newblock L. S. Pontryagin, V. G. Boltyanskii, R. V. Gamkrelidze and E. F. Mishchenko,
\newblock \emph{The Mathematical Theory of Optimal Processes},
\newblock Translated from the Russian by K. N. Trirogoff; edited by 
L. W. Neustadt Interscience Publishers John Wiley \& Sons, Inc.\, New York-London, 1962.

\bibitem{MyID:340} (MR3544685) [10.1007/s11786-016-0268-y]
\newblock A. Rachah and D. F. M. Torres,
\newblock \doititle{Dynamics and optimal control of Ebola transmission},
\newblock \emph{Math. Comput. Sci.}, \textbf{10} (2016), 331--342.
\newblock \arXiv{1603.03265}

\bibitem{Ruan-Wang-JDE-2003} (MR1954511) [10.1016/S0022-0396(02)00089-X]
\newblock S.~Ruan and W.~Wang,
\newblock \doititle{Dynamical behavior of an epidemic model with a nonlinear incidence rate},
\newblock \emph{J. Differential Equations}, \textbf{188} (2003), 135--163.

\bibitem{Safi-Garba-CMMM-2012} (MR2990880) [10.1155/2012/826052]
\newblock M.~A. Safi and S.~M. Garba,
\newblock \doititle{Global stability analysis of {SEIR} model with {H}olling type {II} incidence function},
\newblock \emph{Comput. Math. Methods Med.,} \textbf{2012} (2012), Art. ID 826052, 8~pp.

\bibitem{Shope-EHP-1991} [10.1289/ehp.9196171]
\newblock R.~Shope,
\newblock \doititle{Global climate change and infectious diseases},
\newblock \emph{Environ. Health Perspect,} \textbf{96} (1991), 171--174.

\bibitem{MyID:353} (MR3562914) [10.3934/mbe.2017021]
\newblock C. Silva, H. Maurer and D. F. M. Torres,
\newblock \doititle{Optimal control of a tuberculosis model with state and control delays},
\newblock \emph{Math. Biosci. Eng.}, \textbf{14} (2017), 321--337.
\newblock \arXiv{1606.08721}

\bibitem{MyID:015} (MR2006825) [10.1007/s00498-003-0132-x]
\newblock D. F. M. Torres,
\newblock \doititle{Lipschitzian regularity of the minimizing trajectories for nonlinear optimal control problems},
\newblock \emph{Math. Control Signals Systems,} \textbf{16} (2003), 158--174.
\newblock \arXiv{math/0212103}

\bibitem{Driessche-ME-2008} (MR2428375) [10.1007/978-3-540-78911-6_5]
\newblock P.~van~den Driessche,
\newblock \emph{Deterministic Compartmental Models: Extensions of Basic Models},
\newblock In: Mathematical epidemiology, vol. 1945 of Lecture Notes in Math., Springer, Berlin, 2008, 147--157.

\bibitem{Wang-Derrick-BIMAS-1978} (MR0499992)
\newblock F.~J.~S. Wang and W.~R. Derrick,
\newblock On deterministic epidemic models,
\newblock \emph{Bull. Inst. Math. Acad. Sinica}, \textbf{6} (1978), 73--84.

\bibitem{Weber200195} (MR1853471) [10.1016/S0025-5564(01)00066-9]
\newblock A.~Weber, M.~Weber and P.~Milligan,
\newblock \doititle{Modeling epidemics caused by respiratory syncytial virus ({RSV})},
\newblock \emph{Math. Biosci.}, \textbf{172} (2001), 95--113.

\bibitem{Zhang-Liu-Teng-AM-2012} (MR3010239) [10.1007/s10492-012-0036-5]
\newblock T.~Zhang, J.~Liu and Z.~Teng,
\newblock \doititle{Existence of positive periodic solutions of an {SEIR} model with periodic coefficients},
\newblock \emph{Appl. Math.}, \textbf{57} (2012), 601--616.

\bibitem{Zhang-Teng-BMB-2007} (MR2353845) [10.1007/s11538-007-9231-z]
\newblock T.~Zhang and Z.~Teng,
\newblock \doititle{On a nonautonomous {SEIRS} model in epidemiology},
\newblock \emph{Bull. Math. Biol.}, \textbf{69} (2007), 2537--2559.

\bibitem{Zhang-Teng-CSF-2009} (MR2519441) [10.1016/j.chaos.2007.07.012]
\newblock T.~Zhang and Z.~Teng,
\newblock \doititle{Extinction and permanence for a pulse vaccination delayed {SEIRS} epidemic model},
\newblock \emph{Chaos Solitons Fractals}, \textbf{39} (2009), 2411--2425.

\bibitem{Zhou-Xiao-Li-CSF-2007} (MR2299101) [10.1016/j.chaos.2006.01.002]
\newblock Y.~Zhou, D.~Xiao and Y.~Li,
\newblock \doititle{Bifurcations of an epidemic model with non-monotonic incidence rate of saturated mass action},
\newblock \emph{Chaos Solitons Fractals}, \textbf{32} (2007), 1903--1915.

\end{thebibliography}
\end{document}